\newtheorem{theorem}{Theorem}[section]
\newtheorem{definition}[theorem]{Definition}
\newtheorem{lemma}[theorem]{Lemma}
\newtheorem{proposition}[theorem]{Proposition}
\newtheorem{remark}[theorem]{Remark}
\numberwithin{equation}{section}
\begin{document}

\date{}
\author{\quad Rodolfo Viera}

\title{Delone sets that are not rectifiable under Lipschitz co-uniformly continuous bijections} 
\maketitle

\begin{small}
\noindent{\bf Abstract.} We prove that there exist Delone sets in $\mathbb{R}^d$, $d \geq 2$, which cannot be mapped onto the standard lattice $\mathbb{Z}^d$ by Lipschitz co-uniformly continuous bijections satisfying an asymptotic control on the lower distortion. The impossibility of the unrectifiability crucially uses ideas of Lipschitz regular maps  recently introduced by M. Dymond, V.  Kalu\v{z}a and E. Kopeck\'a.
\end{small}

\vspace{0.5cm}

\section{Introduction}

Motivated by problems in many branches of mathematics ({\em e.g.} metric embedding theory \cite{MatNa, Ost}, geometric group theory \cite{Gromov}, information theory \cite{feige}, mathematical physics of quasicrystals \cite{BaakeGrimm}), over the last years there has been a lot of activity on Lipschitz embeddings of discrete sets. In this work, we focus on a particular aspect of this wide theory, namely the Lipschitz embeddability of Delone subsets of the Euclidean space into the standard lattice. Recall that a {\em Delone set} $\mathcal{D}$ of a metric space $X$ is a subset that is discrete and coarsely dense in a uniform way. This means that there exist positive constants $\sigma,\Sigma$ such that $d (x,y) \geq \sigma$ for all $x \neq y$ in $\mathcal{D}$, 
and for each $z \in X$ there exists $x \in \mathcal{D}$ for which $d (x,z) \leq \Sigma$.\\

Furstenberg and, independently, Gromov, asked whether for every Delone subset of $\mathbb{R}^d$, $d \geq 2$, there 
exists a bi-Lipschitz bijection 
onto $\mathbb{Z}^d$ ({\em i.e.} whether every Delone set in $\mathbb{R}^d$ is \textit{bi-Lipschitz 
rectifiable}). Furstenberg was interested in dynamical aspects of this question (see \cite{BK2} for a broader discussion), while Gromov was motivated by instances of geometric group theory \cite{Gromov}. Their question was answered in the negative by Burago and Kleiner \cite{BK} and, independently, by 
McMullen \cite{Mc}. However, their results only yield existence of non bi-Lipschitz rectifiable Delone sets. Concrete examples 
were produced by Cortez and Navas in \cite{CN}. These examples can be constructed with supplementary properties of ``dynamical type''. In particular, they can be built so that they are {\em repetitive}, which means that the translation 
action on the space of Delone sets (endowed with an appropriate Chabauty topology) is minimal. Equivalently, for each $r > 0$, there 
exists $R > 0$ such that every pattern that appears in a ball of radius $r$ actually appears in every ball of radius $R$. Besides the 
mathematical relevance of this property, 
it is worth mentioning that all known examples of real (physical) quasicrystals lead to repetitive Delone sets.\\

In order to answer in the negative the Furstenberg-Gromov' question, Burago and Kleiner in \cite{BK} and McMullen in \cite{Mc} show that the existence of a non-rectifiable Delone set is a consequence of the following result of analytical nature: {\em there exists a bounded away from zero continuous function $\rho:I^2\to\mathbb{R}$, where $I^2:=[-1/2,1/2]^2$, for which the prescribed Jacobian equation}

\begin{equation}\label{nonbiliprealizable}
Jac(F)=\rho \hspace{1cm} a.e,
\end{equation}

{\em has no bi-Lipschitz solution $F:I^2\to\mathbb{R}^2$}; such densities $\rho:I^2\to\mathbb{R}$ which cannot be realizable as the Jacobian of a bi-Lipschitz map are called {\em non-bi-Lipschitz-realizable}. For more details on the prescribed Jacobian equation, we refer for instance to \cite{DM, JM, RY}. We point out that ``almost all" positive functions $\rho\in L^{\infty}$ can be used to construct non-rectifiable Delone sets, as was shown in \cite{V}.\\

Motivated by a fundamental problem in discrete geometry and information theory (see \cite{feige} and the references therein), Dymond, Kalu\v{z}a and Kopeck\'a recently adapted the Burago-Kleiner / McMullen techniques to answer 
in the negative a question raised by Feige in \cite{MatNa}. More precisely, 
in \cite{checos}, they proved the following remarkable fact: 

\begin{center}
{\em Given $d \geq 2$, there is no constant $L$ such that, for all $n$, every subset of $n^d$ points of $\mathbb{Z}^d$ can be 
bijectively mapped into $\{1,\ldots,n\}^d$ by an $L$-Lipschitz map}. 
\end{center}

Their proof relies strongly on the existence of a positive continuous  function $\rho:I^d\to\mathbb{R}$, with $d\geq 2$, for which the generalized ``push-forward equation" 

\begin{equation}\label{non Regular realizable}
F_{\#}(\rho\lambda)=\lambda|_{F(I^d)},
\end{equation}

has no Lipschitz (regular) solution  $F:I^d\to\mathbb{R}^d$, where $\lambda$ is the Lebesgue measure (for a definition of Lipschitz Regular maps, see Section \ref{sec:lipreg} below); observe that equation (\ref{non Regular realizable}) coincides with (\ref{nonbiliprealizable}) whenever $F$ is bi-Lipschitz.

\medskip

\subsection{Notations and some basics on Lipschitz maps}\label{ssection:notations}

Throughout this work we will denote by $||\cdot||$ the supremum-norm in $\mathbb{R}^d$, and we denote by $B(x,r)$ the open ball with center $x\in\mathbb{R}^d$ and radius $r>0$ with this norm; moreover, we write $B(r)$ for the open ball with radius $r>0$ and centred at the origin. Given $\varepsilon>0$ and a bounded set $A\subset\mathbb{R}^d$, let $B(A,\varepsilon)$ be the $\varepsilon$-neighbourhood of $A$. Given two subsets $A,B\subset\mathbb{R}^d$, we denote by $d(A,B)$ the distance between $A$ and $B$ with respect to $||\cdot||$. For two positive numbers $r<R$ and a point $x\in\mathbb{R}^d$, we denote by $\mathsf{Ann}(x,r,R)$ be the annulus $\{y\in\mathbb{R}^d:\ r<||y-x||\leq R\}$. The set $C(I^d)$ will denote the space of real-valued continuous functions defined on the unit square $I^d:=[-1/2,1/2]^d$, with the supremum-norm $||f||_{\infty}:=\max\{|f(x)|:\ x\in I^d\}$. Finally, for $[\cdot]$ we denote the integer part of a real number.

\medskip

We say that a Delone set $\mathcal{D}\subset\mathbb{Z}^2$ satisfies the {\em $2\mathbb{Z}^2$-property} if $2\mathbb{Z}\times\mathbb{Z}$ and $\mathbb{Z}\times 2\mathbb{Z}$ are subsets of $\mathcal{D}$. Finally, we say that a subset $R$ of an integer square $T=([i,i+k]\times [j,j+k])\cap\mathbb{Z}^2$, where $i,j,k\in\mathbb{Z}$, satisfies the {\em $2\mathbb{Z}^2$-property}, if $T\cap (2\mathbb{Z}\times\mathbb{Z})$ and $T\cap (\mathbb{Z}\times 2\mathbb{Z})$ are contained in $R$. 

\medskip

Let $(X,d_1)$ and $(Y,d_2)$ be two metric spaces. A function $f:X\to Y$ is said to be {\em Lipschitz}, if there exists $L>0$ such that for every $x,y\in X$:

\begin{equation*}
d_2(f(x),f(y))\leq Ld_1(x,y).
\end{equation*}

Given two positive numbers $b\leq L$, we say that $f$ is {\em $(b,L)$-bi-Lipschitz} if for every $x,y\in X$:

\begin{equation*}
bd_1(x,y)\leq d_2(f(x),f(y))\leq Ld_1(x,y);
\end{equation*}

moreover $f$ is called $L$-bi-Lipschitz if $b=1/L$.\\

Given an increasing continuous function $\omega: (0,\infty)\to (0,\infty)$, we say that a bijection $f:X\to Y$ is {\em $\omega$-co-uniformly continuous} if for every $x\in X$ there holds

\begin{equation}\label{counif}
f^{-1}(B(f(x),r))\subset B(x,\omega(r));
\end{equation}

a bijection $f:X\to Y$ is called {\em co-uniformly continuous} if is $\omega$-co-uniformly continuous for some increasing continuous function $\omega:(0,\infty)\to (0,\infty)$ (see \cite{JLPS} for further details). Additionally, we say that a $\omega$-co-uniformly continuous map is {\em of order $o(h(r))$} if $\omega(r)=o(h(r))$ (where $o(\cdot)$ denotes the standard Landau's notation).

\medskip

Given a differentiable map $f:U\subset\mathbb{R}^d\to\mathbb{R}^d$, the (determinant) {\em Jacobian of $f$} is denoted by $Jac(f):=\det(Df)$. Recall that by a classical theorem due to Rademacher (see Theorem 3.1.6 in \cite{Fed}), every Lipschitz map $f:U\subset\mathbb{R}^d\to\mathbb{R}^n$ is differentiable almost everywhere. Thus, the expression ``{\em $Jac(f)$ a.e}" make sense for Lipschitz maps from $\mathbb{R}^d$ to itself.\\ 

Given an integrable function $\rho:I^d\to [0,+\infty)$, let $\rho\lambda$ be the measure defined by

\begin{equation*}
\begin{split}
\rho\lambda(A)&:=\displaystyle\int_{A}\rho\lambda,\hspace{0.5cm} \mbox{for every measurable set }A\subset I^d.\\
\end{split}
\end{equation*}

For a measurable map $F:A\to\mathbb{R}^d$ and a measure $\mu$ in $A$, consider the {\em pushforward measure} $F_{\#}\mu$ given by

\begin{equation}
F_{\#}\mu(B):=\mu(F^{-1}(B)) \hspace{0.5cm} \mbox{for every measurable set }B\subset F(A). 
\end{equation}

\medskip
\subsection{Statement of the result}
The aim of this work is to prove, following \cite{BK} and \cite{checos}, that if we deal with Lipschitz bijections which are co-uniform with a suitable asymptotic control on the lower distortion, then there still exist Delone sets that fail to be rectifiable. 

\medskip

\noindent{\bf Main Theorem.} {\em For each $d \geq 2$, there exist Delone subsets of $\ \mathbb{R}^d$ that admit no Lipschitz co-uniformly continuous bijection with $\mathbb{Z}^d$, with co-uniformity of order $o(r^d)$.}

\medskip
This result extends the well-known bi-Lipschitz case in \cite{BK, Mc, CN}. In particular the Main Theorem implies that there is a Delone set in $\mathbb{R}^d$, $d\geq 2$, which cannot be mapped onto $\mathbb{Z}^d$ by Lipschitz bijections with a {\em H\"{o}lder} co-uniformity. Closely related results have been obtained recently in \cite{checos2}, where it is shown that there is a Delone set $\mathcal{D}\subset\mathbb{R}^d$, $d\geq 2$, for which there is no bijection $f:\mathcal{D}\to\mathbb{Z}^d$ which is $\omega$-homogeneous and $\omega$-co-uniformly continuous for a particular modulus of continuity $\omega:[0,+\infty)\to [0,+\infty)$.
\medskip

It would be interesting to known if there are Delone sets on $\mathbb{R}^d$, $d\geq 2$, that admit no Lipschitz bijections with the standard lattice $\mathbb{Z}^d$, i.e, by avoiding any control in the lower distortion. In addition, since non-rectifiable Delone sets do exist, it would be interesting to know explicit examples of Delone sets that are Lipschitz rectifiable without being bi-Lipschitz rectifiable.

\medskip

\noindent{\bf Sketch of the proof.} Our proof crucially follows the construction proposed in \cite{BK}, \cite{checos} and \cite{checos2}, 
with some mild though crucial changes along the way. Our strategy is described below in the 2-dimensional setting:

\begin{enumerate}
\item[1.- ]Let $\rho:I^2\to\mathbb{R}$ be a positive continuous function such that $8/9\leq \min\rho<\max\rho\leq 1$. As in \cite{BK}, consider a Delone subset $\mathcal{D}_{\rho}$ of $\mathbb{Z}^2$ satisfying the $2\mathbb{Z}^2$-property and emulating the behaviour of $\rho$ at bigger and bigger scales (see Section \ref{ssection:anom_delone} for a precise construction). 

\item[2.-]If there is a Lipschitz bijection $f:\mathcal{D}_{\rho}\to\mathbb{Z}^2$, then $f$ must be ``regular", which means that the preimage of a ball $B$ under this map cannot contain a 
$2 \! \cdot \! \mbox{radius}(B)$-separated set with more than a certain prescribed number of elements (see 
Definition \ref{def Lip reg} and Lemma \ref{def Lip reg 2} below).

\item[3.- ]By renormalizing $\mathcal{D}_{\rho}$ and $f$, and after passing to the limit, we obtain a Lipschitz limit map $F:I^2\to\mathbb{R}^2$ defined on the unit square, which can be also showed to be regular (see Sections \ref{ssection:anom_delone} and \ref{sec:lipreg} for more details).

\item[4.- ]By one of the main results in \cite{checos}, there is an open subset of $\mathbb{R}^2$ whose preimage under $F$ is made up of finitely many disjoint open sets  restricted to which the limit map is bi-Lipschitz; moreover, the number of these sets is uniformly controlled.

\item[5.- ]Finally, from the control on the lower distortion, we show that there is a closed ball $\mathcal{Q}\subset F(I^2)$ such that $F$ satisfies the equation
\begin{equation}\label{genjaceq}
F_{\#}(\rho\lambda)|_{\mathcal{Q}}=\lambda|_{\mathcal{Q}}.
\end{equation}

The proof of that $F$ satisfies equation \eqref{genjaceq} is made by means of a control of the loss of mass in $\mathcal{Q}$ under the renormalization. More precisely, as a consequence of the control of the co-uniformity in \eqref{counif}, we prove that the escape of mass in $\mathcal{Q}$ occurs always close to its boundary.

\medskip

Therefore, if we choose a density map $\rho$ for which there is no a closed ball $\mathcal{Q}$ such that \eqref{genjaceq} has Lipschitz regular solutions, we obtain a Delone set $\mathcal{D}_\rho$ which cannot be Lipschitz rectifiable under co-uniformly continuous bijections of order $o(r^2)$. This function $\rho$ must exist as a consequence of Proposition \ref{non realizable} below (see Theorem 4.1 in \cite{checos}). 
\end{enumerate}

The preceding steps can be summarized as follows: every bijection $f:\mathcal{D}_{\rho}\to\mathbb{Z}^2$ having certain Lipschitz-regularity (e.g, Lipschitz co-uniformly continuous, bi-Lipschitz) must induce certain type of regularity over the continuous density $\rho$. Thus, the desired bad-behaved Delone set $\mathcal{D}_{\rho}$ can be found by choosing $\rho$ lying outside of this regularity class. 

\medskip

\section{Constructing a Delone set from an anomalous density.}\label{ssection:anom_delone}

In this section we explain how to produce a Delone set from a bounded away from zero density, as in \cite{BK} and \cite{checos} (see also \cite{checos2}). To simplify computations, notations and figures, we will restrict ourself to the 2-dimensional case; the higher dimensional case follows analogously.\\

For our purposes, we will consider a continuous density $\rho: I^2\to\mathbb{R}$  such that $8/9\leq \min\rho<\max\rho\leq 1$. Let $(l_n)_{n\in\mathbb{N}}$ and $(m_n)_{n\in\mathbb{N}}$ be two sequences of even positive integers, where $l_n$ is a multiple of $m_n$ and such that $l_n,m_n$ and $(l_n/m_n)\longrightarrow +\infty$. Moreover, assume that $l_n$ divides $l_{n+1}$ for every $n\in\mathbb{N}$; this last hypothesis will be used later in the proof of the Main Theorem. Let $(S_n)_{n\in\mathbb{N}}$ be a sequence of disjoint squares with sides parallel to the coordinate axes, vertices having even integer coordinates and with side-length equal to $l_n$. For each $n\in\mathbb{N}$, let $(T_{n,i})_{i=1}^{m_n^2}$ be a subdivision of $S_n$ by $m_n^2$ squares with sides parallel to the coordinate axes and side-length equal to $l_n/m_n$ (see Figure 1). Finally, let $\phi_n:\mathbb{R}^2\to\mathbb{R}^2$ be an affine linear map sending the square $S_n$ onto the unit square $I^2$.

\begin{figure}[h!]
\psset{xunit=0.6cm,yunit=0.6cm,algebraic=true,dimen=middle,dotstyle=o,dotsize=5pt 0,linewidth=1.6pt,arrowsize=3pt 2,arrowinset=0.25}
\begin{pspicture}(-4,0)(18,11)
\pspolygon[linewidth=1pt](1,1)(4,1)(4,4)(1,4)
\pspolygon[linewidth=1pt](6,1)(14,1)(14,9)(6,9)
\pspolygon[linewidth=1pt, fillstyle=solid, fillcolor=gray](2,2)(3,2)(3,3)(2,3)
\psline[linewidth=1pt](1,1)(4,1)
\psline[linewidth=1pt](4,1)(4,4)
\psline[linewidth=1pt](4,4)(1,4)
\psline[linewidth=1pt](1,4)(1,1)
\psline[linewidth=1pt](6,1)(14,1)
\psline[linewidth=1pt](14,1)(14,9)
\psline[linewidth=1pt](14,9)(6,9)
\psline[linewidth=1pt](6,9)(6,1)
\psline[linewidth=1pt](2,1)(2,4)
\psline[linewidth=1pt](3,1)(3,4)
\psline[linewidth=1pt](1,3)(4,3)
\psline[linewidth=1pt](1,2)(4,2)
\psline[linewidth=1pt](6,3)(14,3)
\psline[linewidth=1pt](6,5)(14,5)
\psline[linewidth=1pt](6,7)(14,7)
\psline[linewidth=1pt](8,1)(8,9)
\psline[linewidth=1pt](10,1)(10,9)
\psline[linewidth=1pt](12,1)(12,9)
\rput[tl](2.120818292403509,0.6386127491732377){$S_n$}
\rput[tl](9.616561529359915,0.6211401542152974){$S_{n+1}$}
\rput[tl](12.2,10.7){$\dfrac{l_{n+1}}{m_{n+1}}$}
\rput[tl](1.1,5.6){$\dfrac{l_n}{m_n}$}
\psline[linewidth=0.4pt](15,1)(16,1)
\psline[linewidth=0.4pt](16,1)(16,4)
\psline[linewidth=0.4pt](16,6)(16,9)
\psline[linewidth=0.4pt](16,9)(15,9)
\rput[tl](15.6,5.3){$l_{n+1}$}
\psline[linewidth=0.4pt](12.18,10)(12,10)
\psline[linewidth=0.4pt](12,10)(12,9.5)
\psline[linewidth=0.4pt](13.806937530175594,9.992750368530654)(14,10)
\psline[linewidth=0.4pt](14,10)(14,9.5)
\psline[linewidth=0.4pt](0.5832299361047588,1.0055372432899843)(0.01747259495794029,1)
\psline[linewidth=0.4pt](0.01747259495794029,1)(0,2)
\psline[linewidth=0.4pt](0,3)(0,4)
\psline[linewidth=0.4pt](0,4)(0.6007025310626992,3.993350981097778)
\rput[tl](-0.18,2.7){$l_n$}
\psline{->}(3,5)(2.5,2.5)
\rput[tl](3,6){$T_{n,i}$}
\end{pspicture}
\caption{The squares $S_n$ and $S_{n+1}$, and a square $T_{n,i}$.}
\end{figure}
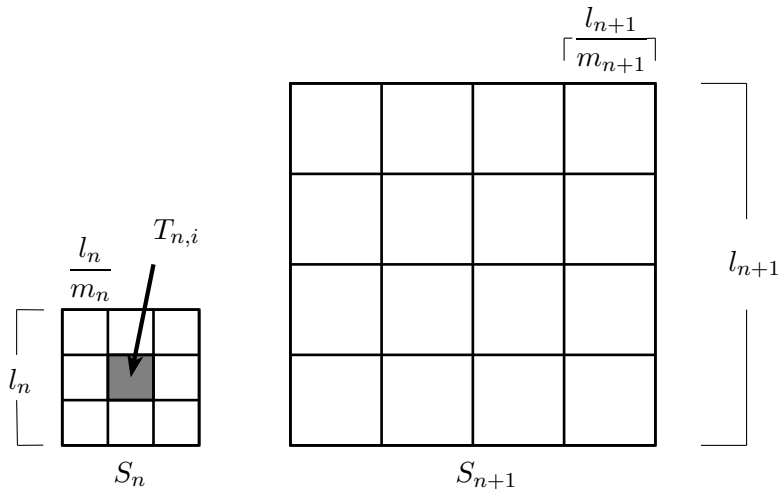

We build a Delone set $\mathcal{D}_{\rho}\subset\mathbb{Z}^2$ which ``emulates" the behaviour of $\rho$ as follows: in each square $T_{n,i}$ we put $\left[\int_{T_{n,i}}\rho\circ\phi_n d\lambda\right]$ points with integer coordinates in such a way that each $T_{n,i}$ satisfies the $2\mathbb{Z}^2$-property (see Figure 2) and such that $\partial T_{n,i}\cap\mathbb{Z}^2\subset\mathcal{D}_{\rho}$; notice that this is possible since $8/9\leq \min\rho<\max\rho\leq 1$. Outside of $\cup_{n\in\mathbb{N}}S_n$, put one point in each integer coordinate. This construction provides a set $\mathcal{D}_{\rho}\subset\mathbb{Z}^2$ which is actually a Delone set satisfying the $2\mathbb{Z}^2$-property since $8/9\leq \min\rho<\max\rho\leq 1$.

\begin{figure}[h!]
\psset{xunit=0.7cm,yunit=0.7cm,algebraic=true,dimen=middle,dotstyle=o,dotsize=5pt 0,linewidth=1.6pt,arrowsize=3pt 2,arrowinset=0.25}
\begin{pspicture}(-8.5,0)(12,6)
\begin{scriptsize}
\psdots[dotstyle=*](1,1)
\psdots[dotstyle=*](1,2)
\psdots[dotstyle=*](1,3)
\psdots[dotstyle=*](1,4)
\psdots[dotstyle=*](1,5)
\psdots[dotstyle=*](2,5)
\psdots[dotstyle=*](3,5)
\psdots[dotstyle=*](4,5)
\psdots[dotstyle=*](5,5)
\psdots[dotstyle=*](5,4)
\psdots[dotstyle=*](5,3)
\psdots[dotstyle=*](5,2)
\psdots[dotstyle=*](5,1)
\psdots[dotstyle=*](2,1)
\psdots[dotstyle=*](3,1)
\psdots[dotstyle=*](4,1)
\psdots[dotstyle=*](3,2)
\psdots[dotstyle=*](3,3)
\psdots[dotstyle=*](3,4)
\psdots[dotstyle=*](2,3)
\psdots[dotstyle=*](4,3)
\psdots[dotstyle=*](4,4)
\end{scriptsize}
\end{pspicture}
\caption{A possible configuration of $\mathcal{D}_{\rho}\cap T_{n,i}$.}
\end{figure}
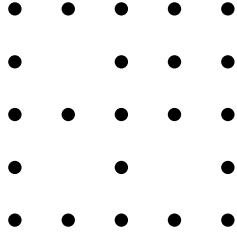


\section{Rescaling up to the limit and Lipschitz regularity}\label{sec:lipreg}

In this section we show, after renormalization and passing to the limit, that a Lipschitz bijection $f:\mathcal{D}_{\rho}\to\mathbb{Z}^2$ induces a Lipschitz regular map from the unit square. We start this section with some basic background and recent results on Lipschitz regular maps. For additional information about these mappings, we refer to \cite{DS}.

\vspace{0.2cm}

\begin{definition}\label{def Lip reg}
Let $X$ and $Y$ be two metric spaces. We say that a Lipschitz map $f:X\to Y$ 
is Lipschitz regular if there is a constant $C\in\mathbb{N}$ such that for 
every ball $B\subset Y$ of radius $r  > 0$, the set $f^{-1}(B)$ can be covered by at most $C$ balls of radii $Cr$. The 
smallest such $C$ (that works for every $r > 0$) is called the regularity constant of $f$, and is denoted by $\mathsf{Reg}(f)$.
\end{definition} 

\vspace{0.2cm}

A useful equivalent interpretation of this definition is provided by the next lemma. 
(The proof is straightforward and is left to the reader.)

\vspace{0.2cm}

\begin{lemma}\label{def Lip reg 2}
A Lipschitz map $f:X\to Y$ is Lipschitz regular if and only if there is a constant $C \in \mathbb{N}$ such that for 
every ball $B\subset Y$ of radius $r > 0$, the set $f^{-1}(B)$ does not contain a $Cr$-separated set with more than $C$ elements. 
If this is the case, then $\mathsf{Reg} (f) \leq C$. Conversely, if $f$ is Lipschitz regular, then $C$ can be taken as being equal to $2\mathsf{Reg}(f)$.
\end{lemma}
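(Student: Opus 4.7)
The plan is to prove the two directions of the equivalence separately via the standard interplay between coverings and maximal packings in a metric space. In both cases the argument is essentially a pigeonhole counting, with the only delicate point being the choice of convention for ``$Cr$-separated'' (strict versus non-strict), which affects whether the covering balls in Definition \ref{def Lip reg} should be open or closed but not the value of the constants.

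For the implication $(\Leftarrow)$, I would assume the packing-type hypothesis and extract a \emph{maximal} $Cr$-separated subset $\{x_1,\dots,x_k\}$ of $f^{-1}(B)$. By hypothesis $k\leq C$. By maximality of this set, any $x\in f^{-1}(B)$ must satisfy $d(x,x_i)\leq Cr$ for some $i$, since otherwise $\{x_1,\dots,x_k,x\}$ would still be $Cr$-separated, contradicting maximality. Hence the $k$ balls $B(x_i,Cr)$ cover $f^{-1}(B)$, and since $k\leq C\leq C$ balls of radius $Cr$ suffice, we conclude $\mathsf{Reg}(f)\leq C$ directly from Definition \ref{def Lip reg}.

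For the converse implication $(\Rightarrow)$, assume $f$ is Lipschitz regular with $\mathsf{Reg}(f)=C_0$. Fix a ball $B\subset Y$ of radius $r$ and a covering $f^{-1}(B)\subset \bigcup_{j=1}^{C_0} B(z_j,C_0r)$. Since any two points inside the same ball $B(z_j,C_0 r)$ are at distance at most $2C_0 r$, no $(2C_0r)$-separated set can place more than one point into a single $B(z_j,C_0 r)$. Pigeonhole then forces any such separated subset of $f^{-1}(B)$ to have at most $C_0$ elements. Taking $C:=2C_0=2\mathsf{Reg}(f)$ ensures both that the separation threshold is $Cr=2C_0 r$ and that the cardinality bound $C_0\leq C$ holds, so the packing condition is satisfied with constant $C=2\mathsf{Reg}(f)$.

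There is no real obstacle in the proof: both directions follow from one-line pigeonhole/maximality arguments, which is why the paper defers it to the reader. The mild subtlety is only notational — one must fix once and for all the convention for separation and adjust whether the covering balls in Definition \ref{def Lip reg} are taken open, closed, or slightly enlarged, but no constants are lost in doing so.
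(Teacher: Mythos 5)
Your argument is correct and is exactly the standard packing-versus-covering duality (take a maximal separated set for one direction, use the pigeonhole on the covering balls for the other) that the paper is alluding to when it says the proof ``is straightforward and is left to the reader''; the paper itself gives no written proof, so there is nothing further to compare. You are also right that the only point requiring care is the open/closed and strict/non-strict convention, and that no constants are affected by whichever consistent choice one makes.
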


\vspace{0.2cm}

One of the main results of \cite{checos} (namely, Theorem 2.10 therein) is that every Lipschitz regular map defined on a bounded 
region of $\mathbb{R}^d$ can be ``densely decomposed" into bi-Lipschitz pieces, as stated below. 

\vspace{0.2cm}

\begin{theorem}\label{bilip decom 1}
Let $U\subset\mathbb{R}^d$ be a nonempty open set. If $f :\overline{U}\to\mathbb{R}^d$ is a Lipschitz regular map, then there exist pairwise 
disjoint open sets $(A_n)_{n\in\mathbb{N}}$ in $U$ such that $\bigcup_{n\in\mathbb{N}}A_n$ is dense in $\overline{U}$ and, for each 
$n\in\mathbb{N}$, the map $f|_{A_n}$ is bi-Lipschitz  with lower bi-Lipschitz constant $b = b \, ( \mathsf{Reg}(f))$.
\end{theorem}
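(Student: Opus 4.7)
My plan is to reduce the decomposition of $U$ to a pointwise differentiability analysis of $f$, in three stages: (a) exploit Rademacher's theorem together with the regularity hypothesis, via Lemma \ref{def Lip reg 2}, to force a quantitative lower bound on the smallest singular value $\sigma_{\min}(Df(x))$ almost everywhere on $U$; (b) deduce a uniform local bi-Lipschitz estimate around each differentiability point; (c) assemble these local bi-Lipschitz neighbourhoods by a Vitali-style greedy covering into a disjoint open family $(A_n)$ with dense union.

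For stage (a), let $L$ denote the Lipschitz constant of $f$ and set $C := 2\,\mathsf{Reg}(f)$. By Rademacher, $f$ is differentiable on a full-measure subset $\mathrm{Diff}(f) \subset U$. I claim there is a constant $b_0 = b_0(C) > 0$ such that $\sigma_{\min}(Df(x_0)) \geq b_0$ for every $x_0 \in \mathrm{Diff}(f)$. The argument is by contradiction: if $\sigma_{\min}(Df(x_0)) < \varepsilon$ and $v$ is a unit vector with $\|Df(x_0) v\| < \varepsilon$, then the one-dimensional family $\{x_0 + k h v : |k| \leq N\}$ with step $h$ slightly larger than $C \varepsilon r$ and $N = \lfloor r/h \rfloor$ is $h$-separated and sits inside $B(x_0, r)$. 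By the expansion $f(x) = f(x_0) + Df(x_0)(x - x_0) + o(\|x - x_0\|)$, for $r$ small enough all of its images lie in $B(f(x_0), 2 \varepsilon r)$. Since $2N+1$ is of order $1/(C \varepsilon)$ and exceeds $C$ as soon as $\varepsilon$ is smaller than $1/C^2$, Lemma \ref{def Lip reg 2} is violated; thus $b_0 \sim 1/C^2$ works.

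For stage (b), at each $x_0 \in \mathrm{Diff}(f)$ the differential $Df(x_0)$ has singular values in $[b_0, L]$, and an inverse function-type argument (comparing $f$ to its affine tangent map and absorbing the sublinear remainder on small balls) gives a radius $\delta(x_0) > 0$ on which $f$ is bi-Lipschitz with lower constant $b := b_0/2$ and upper constant $L$; these bi-Lipschitz constants are uniform in $x_0$, though $\delta(x_0)$ may vary. For stage (c), I would enumerate a countable dense subset $(y_k)$ of $\mathrm{Diff}(f)$ and inductively select $A_n$ to be an open ball centred at some $y_{k_n} \in \mathrm{Diff}(f) \setminus \overline{\bigcup_{j<n} A_j}$, with radius at least half the supremum of admissible radii (those for which the closed ball lies in $B(y_{k_n}, \delta(y_{k_n})) \cap U \setminus \overline{\bigcup_{j<n} A_j}$). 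A standard Vitali-type argument shows that no $y_k$ is left outside $\overline{\bigcup_n A_n}$, so $\bigcup_n A_n$ is dense in $U$, hence in $\overline{U}$, while each $f|_{A_n}$ is bi-Lipschitz with common lower constant $b$.

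I expect the main obstacle to be stage (a): converting the discrete combinatorial regularity hypothesis into a scale-invariant lower bound on singular values with constants depending only on $\mathsf{Reg}(f)$. The slab-packing idea is geometrically natural, but the interplay between $\varepsilon$, the scale $r$, and the linearization error needs careful quantitative bookkeeping. A secondary difficulty in stage (c) is to arrange the greedy selection so that the disjoint open family is genuinely dense in $\overline{U}$, rather than merely of full Lebesgue measure.
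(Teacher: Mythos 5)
This statement (Theorem~\ref{bilip decom 1}) is not proved in the paper under review; it is quoted verbatim from Dymond--Kalu\v{z}a--Kopeck\'a (\cite{checos}, Theorem~2.10). So there is no ``paper's own proof'' to compare against, only the external source. With that caveat, here is my assessment of your attempt.

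Stage~(a) is essentially sound. Modulo some factor-of-$2$ bookkeeping between the separation parameter $h$, the radius $2\varepsilon r$ of the image ball, and the threshold $C\cdot(\text{radius})$ required by Lemma~\ref{def Lip reg 2}, the line-of-points argument does force $\sigma_{\min}(Df(x_0))\geq b_0(\mathsf{Reg}(f))$ at every point of differentiability, with $b_0\sim \mathsf{Reg}(f)^{-2}$. You flag this as the step you are least sure of, but it is actually the part that works.

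The genuine gap is Stage~(b). Differentiability of a Lipschitz map at a single point $x_0$ with $\sigma_{\min}(Df(x_0))\geq b_0>0$ does \emph{not} imply that $f$ is injective, let alone bi-Lipschitz with a uniform lower constant, on any neighbourhood of $x_0$. The linearization error at $x_0$ controls $f(x)-f(x_0)-Df(x_0)(x-x_0)$ as $o(\|x-x_0\|)$, but to run an inverse-function argument on a ball you would need to control $f(x)-f(x')-Df(x_0)(x-x')$ as $o(\|x-x'\|)$ \emph{uniformly} for pairs $x,x'$ in the ball. That is the condition of strict (or strong) differentiability at $x_0$, which a Lipschitz map need not satisfy anywhere, and in particular is not delivered by Rademacher's theorem. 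A concrete obstruction: take $f(x_1,\dots,x_d)=(g(x_1),x_2,\dots,x_d)$ where $g(t)=t+2t^2\sin(1/t)$ near $0$ (extended Lipschitz-ly). Then $g'(0)=1$, $g$ is Lipschitz, one checks that $g$ is Lipschitz regular near $0$, but $g'$ changes sign in every neighbourhood of $0$, so $g$ (and hence $f$) is not injective on any ball around the origin even though $Df(0)$ is the identity. Your Stage~(b) would erroneously declare such a point to have a bi-Lipschitz neighbourhood. Since Stage~(c) consumes the neighbourhoods produced by Stage~(b), the whole construction stalls here.

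To repair this one needs a qualitatively different mechanism, and this is precisely where the proof in \cite{checos} departs from the naive pointwise-differentiability route: roughly, one must leverage the Lipschitz regularity \emph{itself} (not just the induced lower bound on $Df$ a.e.) to pass from the classical measurable bi-Lipschitz decomposition \`a la Federer to a decomposition into \emph{open} pieces, and to make the lower bi-Lipschitz constant depend only on $\mathsf{Reg}(f)$. That upgrade is the substantive content of their Theorem~2.10 and cannot be recovered by comparing $f$ to its affine tangent map at differentiability points.
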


\vspace{0.2cm}

As a consequence of Theorem \ref{bilip decom 1}, it is showed in \cite{checos} that the image by 
a Lipschitz regular map contains an open set whose preimage is made of a controlled number of open subsets where the map is bi-Lipschitz, with 
lower bi-Lipschitz constant depending only on the regularity constant. This is stated below, and corresponds to Proposition 2.15 in \cite{checos}.

\vspace{0.2cm}

\begin{proposition}\label{bilip decom 2}
Let $U\subset\mathbb{R}^d$ be a nonempty open  set. If $f:\overline{U}\to\mathbb{R}^d$ is a Lipschitz regular map, then there exist a 
nonempty open set $T\subset f(\overline{U})$, an integer $N\in\{1,\ldots,\mathsf{Reg}(f)\}$, and pairwise disjoint open sets $W_1,\ldots W_N\subset\overline{U}$, 
such that $f^{-1}(T)=\displaystyle\bigcup_{i=1}^N W_i$ and, for each $1\leq i\leq N$, the map $f|_{W_i}: W_i \to T$ is a bi-Lipschitz homeomorphism, 
with lower bi-Lipschitz constant $b=b(\mathsf{Reg}(f))$. Actually, one may take $b = \frac{1}{2 \mathsf{Reg}(f)^2}$.
\end{proposition}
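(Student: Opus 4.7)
The plan is to promote the dense bi-Lipschitz decomposition of Theorem~\ref{bilip decom 1} to a genuine finite sheeted covering over a small open ball. First apply that theorem to obtain pairwise disjoint open sets $(A_n)_{n\in\mathbb{N}}\subset U$ with $\bigcup_n A_n$ dense in $\overline{U}$, each $f|_{A_n}$ being bi-Lipschitz with lower constant $b(\mathsf{Reg}(f))=\frac{1}{2\,\mathsf{Reg}(f)^2}$. Since each $f|_{A_n}$ is a bi-Lipschitz homeomorphism onto its image, every $f(A_n)$ is open in $\mathbb{R}^d$, so $\bigcup_n f(A_n)$ is a nonempty open subset of $f(\overline{U})$. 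A crucial elementary consequence of Lipschitz regularity is that $|f^{-1}(\{y\})|\le \mathsf{Reg}(f)$ for every $y$: if there were $\mathsf{Reg}(f)+1$ distinct preimages at pairwise distance at least $\delta>0$, choosing $r<\delta/(2\mathsf{Reg}(f))$ in Definition~\ref{def Lip reg} would force two of them into a common ball of radius $\mathsf{Reg}(f)\,r<\delta/2$, a contradiction.

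I now pick a ``generic'' base point $y_0\in\bigcup_n f(A_n)$ whose fibre lies entirely in the good part of the decomposition. Writing $E:=\overline{U}\setminus\bigcup_n A_n$, the aim is $y_0\notin f(E)$. Granting that the decomposition of \cite{checos} can be arranged so that $\lambda(E)=0$ --- whence Lipschitzness of $f$ gives $\lambda(f(E))=0$ and so $f(E)$ has empty interior --- such a $y_0$ exists inside the nonempty open set $\bigcup_n f(A_n)\setminus f(E)$. With this choice, enumerate $f^{-1}(\{y_0\})=\{x_1,\dots,x_N\}$ and let $n_i$ be the unique index with $x_i\in A_{n_i}$; the indices $n_1,\dots,n_N$ are pairwise distinct because each $f|_{A_n}$ is injective, and $N\le\mathsf{Reg}(f)$ by the fibre bound.

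Finally I localize by a compactness argument. The claim is that for all sufficiently small $r>0$, the ball $T:=B(y_0,r)$ satisfies both $T\subset\bigcap_{i=1}^N f(A_{n_i})$ and $f^{-1}(T)\subset\bigcup_{i=1}^N A_{n_i}$. The first inclusion is immediate from openness of the finitely many $f(A_{n_i})\ni y_0$. For the second, argue by contradiction: if it failed for arbitrarily small $r$, one could extract a sequence $z_k\in f^{-1}(B(y_0,1/k))\setminus\bigcup_i A_{n_i}$; by compactness of $\overline{U}$ a subsequence converges to some $z^*$ with $f(z^*)=y_0$, forcing $z^*=x_j$ for some $j$. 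But $A_{n_j}$ is open, so $z_k\in A_{n_j}$ eventually, contradicting the choice of $z_k$. Setting $W_i:=(f|_{A_{n_i}})^{-1}(T)$ then yields pairwise disjoint open sets with $f|_{W_i}\colon W_i\to T$ a bi-Lipschitz homeomorphism of lower constant $\frac{1}{2\,\mathsf{Reg}(f)^2}$, and $f^{-1}(T)=\bigcup_{i=1}^N W_i$ by the localization step.

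The main obstacle is the second step: ruling out that every $y\in\bigcup_n f(A_n)$ has some preimage in the exceptional set $E$. The cardinality bound on fibres alone does not preclude $f(E)$ from containing a ball, so one genuinely needs finer properties of the decomposition of \cite{checos} --- specifically that $E$ is Lebesgue-null --- to ensure that $f(E)$ has empty interior and a generic $y_0$ can be chosen.
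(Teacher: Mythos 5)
The paper itself does not prove this statement; it simply cites Proposition~2.15 of \cite{checos}, so there is no internal proof to measure your attempt against. That said, your outline is the natural one and, as far as I can tell, follows essentially the same route as the source: locate a base value $y_0$ whose entire fibre lies in the good part of the dense decomposition, and localize around it. Your supporting steps are all correct: the fibre bound $|f^{-1}(y)|\le\mathsf{Reg}(f)$ via pigeonhole in Definition~\ref{def Lip reg}, the use of invariance of domain to see each $f(A_n)$ is open, the observation that distinct fibre points must lie in distinct $A_{n_i}$'s, and the localization step that a small ball $T=B(y_0,r)$ has $T\subset\bigcap_i f(A_{n_i})$ and $f^{-1}(T)\subset\bigcup_i A_{n_i}$. (One small caveat in the localization: $\overline U$ need not be compact in general. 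You should instead use that $f^{-1}(B(y_0,1))$ is bounded, which follows directly from Lipschitz regularity, and extract the convergent subsequence there.)

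The gap you flag is genuine and is the crux. Density of $\bigcup_n A_n$ alone does not prevent the closed complement $E=\overline U\setminus\bigcup_n A_n$ from having positive Lebesgue measure (think of a fat Cantor set), and a positive-measure $E$ could in principle have $f(E)$ with nonempty interior, destroying the existence of a good $y_0$. What makes the argument work is a stronger measure-theoretic fact that does not appear in the version of Theorem~\ref{bilip decom 1} quoted in this paper but is available for Lipschitz regular maps: $\lambda(E)=0$. This can be supplied as follows. First, Lipschitz regular maps pull back null sets to null sets: covering a null set by small balls $B_j$ and using that $f^{-1}(B_j)$ is covered by $\mathsf{Reg}(f)$ balls of $\mathsf{Reg}(f)$ times the radius gives $\lambda(f^{-1}(B_j))\le\mathsf{Reg}(f)^{d+1}\lambda(B_j)$. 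Second, by Rademacher and the area formula for Lipschitz maps, the critical set $C=\{x:Df_x \text{ exists and } \det Df_x=0\}$ satisfies $\lambda(f(C))=0$, whence $\lambda(C)\le\lambda(f^{-1}(f(C)))=0$. Thus the set where $Df$ exists and is invertible has full measure, and the construction underlying Theorem~\ref{bilip decom 1} in \cite{checos} covers that set up to a null set. With $\lambda(E)=0$ in hand, $f(E)$ is null by Lipschitzness, so it has empty interior, your generic $y_0$ exists, and the rest of your argument closes the proof. So: correct strategy, correctly executed localization, and you correctly isolated the single nontrivial ingredient you were missing.
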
 

\vspace{0.3cm}

The following result deals with maps defined on discrete sets of points. It 
asserts that every Lipschitz bijection defined on a Delone set that satisfies the $2\mathbb{Z}^2$-property onto the 
integer lattice must be Lipschitz regular. Actually, as we will see along the proof, the $2\mathbb{Z}^2$-property may be replaced by any 
other property ensuring that densities of points in large balls are everywhere bounded from below (away from zero). Given $x\in\mathbb{R}^2$, 
$r>0$ and a subset $\mathcal{L}$ of $\mathbb{R}^2$, we will denote the set $B(x,r)\cap\mathcal{L}$ by $B_{\mathcal{L}}(x,r)$.

\vspace{0.2cm}

\begin{proposition}\label{Lip reg 1}
Let $\mathcal{D}\subset\mathbb{Z}^2$ be a Delone set satisfying the $2\mathbb{Z}^2$-property. If $f \!: \mathcal{D}\to\mathbb{Z}^2$ 
is an $L$-Lipschitz bijection, then $f$ is Lipschitz regular, with $\mathsf{Reg}(f)\leq\max\{2, C \, (L+1)^2\}$ for some universal constant $C > 0$.
\end{proposition}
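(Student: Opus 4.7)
The plan is to apply Lemma \ref{def Lip reg 2} and exhibit a constant $C \leq \max\{2, C_0(L+1)^2\}$, with $C_0$ a universal constant, such that for every sup-norm ball $B(y,r)$ in $\mathbb{R}^2$, the preimage $f^{-1}(B(y,r)\cap\mathbb{Z}^2)$ contains no $Cr$-separated subset of cardinality larger than $C$. The heart of the argument is a volume-packing count that compares the number of points of $\mathcal{D}$ lying in disjoint source balls of radius $Cr/2$ (bounded below thanks to the $2\mathbb{Z}^2$-property) against the number of lattice points in the target ball into which these sources are crushed by the Lipschitz map.

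Fix $r \geq 1$ and assume for contradiction that $x_1,\ldots,x_N \in f^{-1}(B(y,r)\cap\mathbb{Z}^2)$ are $Cr$-separated with $N > C$, so that the balls $B(x_i, Cr/2)$ are pairwise disjoint. Since $\mathcal{D} \supset (2\mathbb{Z}\times\mathbb{Z}) \cup (\mathbb{Z}\times 2\mathbb{Z})$, a direct lattice count (valid for $Cr \geq 1$) furnishes a universal $\kappa > 0$ with $|B(x_i, Cr/2) \cap \mathcal{D}| \geq \kappa(Cr)^2$ for each $i$. On the other hand, since $f$ is $L$-Lipschitz and bijective, the images of these disjoint sources are disjoint subsets of $B(y, r + LCr/2) \cap \mathbb{Z}^2$, whose cardinality is at most $(2r + LCr + 1)^2 \leq (LC+3)^2 r^2$ (using $r \geq 1$). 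Summing over $i$ yields
\[
N \cdot \kappa (Cr)^2 \;\leq\; (LC + 3)^2 r^2,
\]
so that $N \leq (LC+3)^2/(\kappa C^2)$. Provided $C \geq 3$ we have $LC + 3 \leq (L+1)C$, giving $N \leq (L+1)^2/\kappa$; choosing any universal $C_0$ exceeding $1/\kappa$ and setting $C := \max\{2, C_0(L+1)^2\}$ forces $N \leq C$, the desired contradiction.

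The small-$r$ regime $r < 1$ is handled directly: for $r < 1/2$ the ball $B(y,r)$ contains at most one integer point, so the preimage under the bijection $f$ has at most one element; for $1/2 \leq r < 1$ a direct count yields a uniformly bounded number of integer points in $B(y,r)$, and hence in the preimage. Both cases are absorbed by $C \geq 2$ (or by enlarging $C_0$ if needed), completing the verification of the hypothesis of Lemma \ref{def Lip reg 2} and yielding the stated bound on $\mathsf{Reg}(f)$.

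The main obstacle is a careful tracking of the constants to obtain the advertised quadratic dependence on $L$. The quadratic rate is forced by the dimension: the source-side count of $\mathcal{D}$-points scales as $(Cr)^2$ whereas the target count of $\mathbb{Z}^2$-points inflates by the Lipschitz factor and scales as $(LCr)^2$, and their ratio, which dominates $N$, is therefore of order $L^2$. The analogous argument in $\mathbb{R}^d$ with $d$-th powers in place of squares would produce the corresponding $(L+1)^d$ bound, which is however not needed here.
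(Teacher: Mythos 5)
Your argument is correct and follows essentially the same volume-packing strategy as the paper: comparing the count of $\mathcal{D}$-points inside disjoint source balls around a separated set (bounded from below via the $2\mathbb{Z}^2$-property) to the count of $\mathbb{Z}^2$-points in the target ball that these sources are crushed into by the $L$-Lipschitz bijection. The only cosmetic differences are that the paper works with $2r$-separated sets and radius-$r$ source balls (exploiting that any $Cr$-separated set with $C\geq 2$ is automatically $2r$-separated) rather than carrying the eventual constant $C$ through the estimate as you do, and that you treat the small-$r$ regime explicitly whereas the paper folds it into a parenthetical remark about the choice of $C_2$.
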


\begin{proof}
Let $y\in\mathbb{Z}^2$ and $r>0$. Consider $\Gamma\subset f^{-1}(B(y,r))$ a maximal $2r$-separated set, and write 
$\Gamma=\{x_1,\ldots x_{|\Gamma|}\}$. Then, by the $L$-Lipschitz condition we have that 
\begin{equation*}
f\left(\bigcup_{i=1}^{|\Gamma|}B_{\mathcal{D}}(x_i,r)\right)\subset B_{\mathbb{Z}^2}(y,r+rL).
\end{equation*}
Observe that, for $i=1,\ldots,|\Gamma|$, the open balls $B(x_i,r)$ are pairwise disjoint.
Since $f$ is a bijection we obtain that, for a certain constant $C_1 \geq 1$, 
\begin{equation}\label{est-L-reg}
\displaystyle\sum_{i=1}^{|\Gamma|}\left|B_{\mathcal{D}}(x_i,r)\right| 
= \left|f\left(\bigcup_{i=1}^{|\Gamma|}B_{\mathcal{D}}(x_i,r)\right)\right| 
\leq \big| B_{\mathbb{Z}^2}(y,r+rL) \big| \leq C_1 r^2 (L+1)^2.
\end{equation}
Now, by the $2\mathbb{Z}^2$-property, the cardinality 
$|B_{\mathcal{D}}(x_i,r)|$ is at least 
$C_2r^2 $ for another universal constant $C_2 > 0$. (The value of $C_2$ can be taken as $8/9 - \varepsilon$ 
provided $r$ is large enough.) Thus, by (\ref{est-L-reg}),
\begin{equation*}
C_2r^2 |\Gamma| \leq C_1r^2(L+1)^2.
\end{equation*}
We hence conclude that $|\Gamma|\leq C_1 (L+1)^2 / C_2$. Therefore, by Lemma \ref{def Lip reg 2},  
$f$ is Lipschitz regular with regularity constant at most $\max\{2,C_1 (L+1)^2 / C_2\}$.
\end{proof}

From now on, let $\mathcal{P}_n:=\mathcal{D}_{\rho}\cap S_n$, where $\mathcal{D}_{\rho}$ and $(S_n)_{n\in\mathbb{N}}$ are the Delone set and the sequence of squares given in \S \ref{ssection:anom_delone}, respectively. Let $\phi_n:\mathbb{R}^2\to\mathbb{R}^2$ be the 
homothety defined in Section \ref{ssection:anom_delone}, and define $\mathcal{R}_n:=\phi_n(\mathcal{P}_n)$.

\medskip

Assume there is an $L$-Lipschitz bijection $f : \mathcal{D}_{\rho} \to \mathbb{Z}^2$.  As in \cite{BK} and \cite{checos}, we proceed to normalize $f$ to each square $\mathcal{P}_n$, that is, to consider the map $f_n:\mathcal{R}_n\to\frac{1}{l_n}\mathbb{Z}^2$ defined by

\begin{equation}\label{normalization}
f_n(x):=\frac{1}{l_n}(f\circ\phi_n^{-1}(x)-f\circ\phi_n^{-1}(\tilde{x}_n)),
\end{equation}

where $\tilde{x}_n\in\mathcal{R}_n$ is some base point. Notice that for each $n \geq 1$ the map $f_n$ is Lipschitz regular with $\mathsf{Lip} (f_n) \leq L$ and $\mathsf{Reg}(f_n)=\mathsf{Reg}(f)$. By Kirszbraun's extension theorem\footnote{Actually, we do not really need to keep the same Lipschitz constant $L$ for the extension map, 
but just another (larger) constant that depends only on $L$, and a weaker form of Kirszbraum's theorem proving this is much easier 
to establish.} (see, for instance, Theorem 2.10.43 in \cite{Fed}), each function $f_n$ can be extended 
to an $L$-Lipschitz map 

\begin{equation}{\label{extension}}
\widehat{f}_n:I^2\to\mathbb{R}^2.
\end{equation}

By the Arzel\'a-Ascoli's theorem, there exists a 
subsequence $(\widehat{f}_{n_k})_{k\in\mathbb{N}}$ of $(\widehat{f}_n)_{n\geq 1}$, converging to an $L$-Lipschitz map 
$F:I^2\to\mathbb{R}^2$; from now on, the subsequence $(\widehat{f}_{n_k})_{k\geq 1}$ will be just denoted  $(\widehat{f}_{n})_{n\geq 1}$. As we next show, the Lipschitz regularity is inherited from $f$ to $F$.

\vspace{0.2cm}

\begin{proposition}\label{Lip reg 2}
The map $F:I^2\to\mathbb{R}^2$ built above is Lipschitz regular, with $\mathsf{Reg}(F) \leq 34 \ \mathsf{Reg}(f)$.
\end{proposition}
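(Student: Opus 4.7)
My plan is to invoke the equivalent characterisation of Lipschitz regularity provided by Lemma \ref{def Lip reg 2}: to establish $\mathsf{Reg}(F) \leq 34\,\mathsf{Reg}(f)$ it is enough to show that, for every ball $B(y,r)\subset\mathbb{R}^2$, the preimage $F^{-1}(B(y,r))$ contains no $Cr$-separated set of cardinality exceeding $C$, where $C := 34\,\mathsf{Reg}(f)$. The strategy is to argue by contradiction, transporting any such hypothetical configuration for $F$ back to the pre-limit maps $f_n$ and then applying Lemma \ref{def Lip reg 2} to $f_n$ (whose regularity constant equals $\mathsf{Reg}(f)$, as one checks from the defining scaling $f_n(x) = l_n^{-1}(f \circ \phi_n^{-1}(x) - c_n)$: a ball of radius $r$ in the range of $f_n$ pulls back, via $\phi_n$, to the pullback under $f$ of a ball of radius $l_n r$, and coverings scale accordingly).

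Suppose for contradiction that $\{z_1,\dots,z_N\} \subset F^{-1}(B(y,r))$ is $Cr$-separated with $N > C$. Three small approximations bring the configuration into the discrete setting. First, the uniform convergence $\widehat{f}_n \to F$ gives $\widehat{f}_n(z_i) \in B(y, r+\varepsilon_n)$ with $\varepsilon_n \to 0$. Second, the $2\mathbb{Z}^2$-property of $\mathcal{D}_\rho$ yields $2\mathbb{Z}\times\mathbb{Z} \subset \mathcal{D}_\rho$, whose sup-norm covering radius is bounded by an absolute constant $\Sigma$; since $S_n$ has even-integer vertices, the set $\mathcal{R}_n = \phi_n(\mathcal{P}_n)$ is therefore $(\Sigma/l_n)$-dense in all of $I^2$, and one may pick $x_i \in \mathcal{R}_n$ with $\|x_i - z_i\| \leq \Sigma/l_n$. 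Third, since $\widehat{f}_n = f_n$ on $\mathcal{R}_n$ and $\widehat{f}_n$ is $L$-Lipschitz, the triangle inequality delivers
\[
\|f_n(x_i) - y\| \leq r + \varepsilon_n + L\Sigma/l_n =: r'_n,
\qquad
\|x_i - x_j\| \geq Cr - 2\Sigma/l_n.
\]

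For $n$ large the correction terms $\varepsilon_n$ and $1/l_n$ are arbitrarily small relative to $r$, so the gap $(C - 2\mathsf{Reg}(f))\,r = 32\,\mathsf{Reg}(f)\,r$ easily absorbs them, giving $Cr - 2\Sigma/l_n \geq 2\,\mathsf{Reg}(f_n)\cdot r'_n$. Hence the $x_i$ form a $2\mathsf{Reg}(f_n) r'_n$-separated subset of $f_n^{-1}(B(y,r'_n))$ of cardinality $N > 34\,\mathsf{Reg}(f) > 2\,\mathsf{Reg}(f_n)$, contradicting Lemma \ref{def Lip reg 2} applied to $f_n$. The forward direction of the same lemma then yields $\mathsf{Reg}(F) \leq C = 34\,\mathsf{Reg}(f)$, as desired.

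The only mildly delicate ingredient is the uniform density of $\mathcal{R}_n$ in $I^2$ up to the boundary, which is exactly where the $2\mathbb{Z}^2$-property is used: without it the preimage lattice could be locally depleted and the approximants $x_i$ might fail to exist for $z_i$ near $\partial I^2$. Beyond this point the argument is a straightforward quantitative limit-passage of the regularity inequality from $f_n$ to $F$; the explicit constant $34$ is chosen generously enough that the three vanishing error terms can be hidden inside the asymptotic estimate.
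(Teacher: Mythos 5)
Your proof is correct, but it takes a genuinely different and, in fact, simpler route than the paper's. The paper begins with a \emph{tight} $2\mathsf{Reg}(f)r$-separated set $\Gamma \subset F^{-1}(B(y,r))$, transports it to the discrete side where the separation degrades to $2\mathsf{Reg}(f)r-\varepsilon$, and then spends the bulk of the proof on a geometric pigeonhole argument (the angle $\alpha \geq \pi/8$ and the sixteenfold covering of the annulus $\mathsf{Ann}(p_i, 2\mathsf{Reg}(f)r-\varepsilon, 2\mathsf{Reg}(f)r)$ by polygonal sectors) to delete at most a $16/17$ fraction of points and recover a \emph{genuinely} $2\mathsf{Reg}(f)r$-separated subset $\Gamma_n'$; the factor $34 = 2 \cdot 17$ is inherited from that covering. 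You instead start from a \emph{generously} separated configuration at scale $Cr$ with $C = 34\mathsf{Reg}(f)$, so that the slack $(C - 2\mathsf{Reg}(f))r = 32\mathsf{Reg}(f)r$ directly absorbs the $O(\varepsilon_n + 1/l_n)$ perturbation incurred by replacing $z_i$ with nearby lattice points $x_i \in \mathcal{R}_n$ and enlarging the target ball from $r$ to $r'_n$. After that absorption the desired contradiction drops out immediately from Lemma \ref{def Lip reg 2} applied to $f_n$, with no deletion step needed. Your use of the $2\mathbb{Z}^2$-property to guarantee $(\Sigma/l_n)$-density of $\mathcal{R}_n$ throughout $I^2$ (with $\Sigma=1$, since $S_n$ has even-integer vertices and $l_n$ is even) is the right way to make the approximation $z_i \mapsto x_i$ legitimate near $\partial I^2$, whereas the paper sidesteps this by requiring $\varepsilon < d(\Gamma, \partial F^{-1}(B(y,r)))$.

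What each approach buys: the paper's argument shows \emph{why} a factor of $17$ is geometrically natural, and it works even if one insisted on keeping the separation at exactly $2\mathsf{Reg}(f)r$; your argument is shorter and more robust, and in fact — as you implicitly observe by noting how much slack is available — it would yield the sharper bound $\mathsf{Reg}(F) \leq 2\mathsf{Reg}(f)+1$ by taking $C = 2\mathsf{Reg}(f)+1$ instead of $34\mathsf{Reg}(f)$, since any strictly positive gap $(C-2\mathsf{Reg}(f))r$ absorbs vanishing errors once $n$ is large. You kept $C = 34\mathsf{Reg}(f)$ only to match the stated constant. Two cosmetic remarks: (i) you should note explicitly that $f_n(x_i) = \widehat{f}_n(x_i)$ since $x_i \in \mathcal{R}_n$, so that the bound $\|f_n(x_i)-y\| \leq r'_n$ follows from the triangle inequality applied to $\widehat{f}_n$; (ii) Lemma \ref{def Lip reg 2} asks for $C \in \mathbb{N}$, so formally take $C = \lceil 34\mathsf{Reg}(f)\rceil$, though the paper is equally informal about this.
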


\begin{proof}
Let $y\in F(I^2)$ and $r>0$. Consider a maximal $2\mathsf{Reg}(f)r$-separated set $\Gamma=\{x_1,\ldots,x_{|\Gamma|}\}$ 
contained in $F^{-1}(B(y,r))$. Given 
$$0 < \varepsilon < \min \left\{2\mathsf{Reg}(f)r\left(1-\sqrt{\frac{2-\sqrt{2}}{2+\sqrt{2}}}\right), d(\Gamma, \partial F^{-1}(B(y,r))) \right\},$$ 
by the convergence of $\widehat{f}_n$ to $F$, there is a positive integer $n_0=n_0(\varepsilon)$ such that,  
for every $n\geq n_0$, there exist $p_1,\ldots,p_{|\Gamma|}\in \mathcal{R}_n$ for which the following hold: 

\begin{itemize}
\item for every $i=1,\ldots,|\Gamma|$, we have that $||p_i-x_i||<\varepsilon/2$,
\item the set $\Gamma_n:=\{p_1,\ldots,p_{|\Gamma|}\}$ is contained in $F^{-1}(B(y,r))$ and,
\item $f_n(\Gamma_n)\subset B(y,r)$.
\end{itemize}

Observe that $\Gamma_n$ is $(2\mathsf{Reg}(f)r-\varepsilon)$-separated, since $\Gamma$ is $2\mathsf{Reg}(f)r$-separated.\\ 

We will delete some points in $\Gamma_n$ in an appropriate way in order to obtain a set $\Gamma_n'\subset f_n^{-1}(B(y,r))$ 
that is $2\mathsf{Reg}(f)r$-separated and such that $|\Gamma_n'|\geq |\Gamma|/17$. By Lemma \ref{def Lip reg 2}, this will imply that 
$$| \Gamma | \leq 17 \, | \Gamma_n' | \leq 34  \, \mathsf{Reg} (f_n)=34 \,\mathsf{Reg}(f)$$
hence $F$ is Lipschitz-regular with $\mathsf{Reg} (F) \leq 42 \, \mathsf{Reg} (f)$.\\

To build the set $\Gamma_n'$, we consider the angle 
$$\alpha = \arctan \left( \frac{2\mathsf{Reg}(f)r-\varepsilon} {2\mathsf{Reg}(f)r} \right) \geq \arctan \left( \sqrt{\frac{2-\sqrt{2}}{2+\sqrt{2}}} \right) = \frac{\pi}{8},$$
where the inequality follows from the condition
$$\varepsilon <2\mathsf{Reg}(f)r\left(1- \sqrt{\frac{2-\sqrt{2}}{2+\sqrt{2}}}\right).$$ 

This is the angle that appears in Figure 3 below. In the area depicted in black, no pair of points in $\Gamma_n$ is at distance $> 2\mathsf{Reg}(f)r - \varepsilon$. 
The same happens in a similar region with angle $\pi / 8$. Since 16 of these polygonal regions cover exactly the anular region between 
a square of radius $2\mathsf{Reg}(f)r-\varepsilon$ and another of radius $2\mathsf{Reg}(f)r$ (with the same center), we deduce -by the pigeonhole principle- 
that no more than $16$ points of $\Gamma_n$ in this annular region can be $(2\mathsf{Reg}(f)r-\epsilon)$-separated. \\

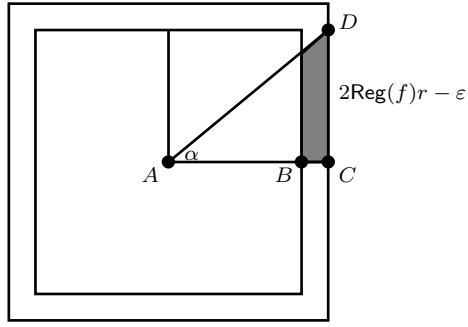
\begin{figure}[h!]
\centering
\psset{xunit=0.7cm,yunit=0.7cm,algebraic=true,dimen=middle,dotstyle=o,dotsize=5pt 0,linewidth=2pt,arrowsize=3pt 2,arrowinset=0.25}
\begin{pspicture}(0,0)(10,8)
\pspolygon[linewidth=1pt](2,1)(8,1)(8,7)(2,7)
\pspolygon[linewidth=1pt](2.5,1.5)(7.5,1.5)(7.5,6.5)(2.5,6.5)
\pspolygon[linewidth=1pt, fillcolor=gray, fillstyle=solid](7.5,4)(8,4)(8,6.5)(7.5,6.05)
\psline[linewidth=1pt](5,4)(8,4)
\psline[linewidth=1pt](5,4)(5,6.5)
\psline[linewidth=1pt](5,4)(8,6.5)
\begin{scriptsize}
\psdots[dotstyle=*](5,4)
\psdots[dotstyle=*](7.5,4)
\psdots[dotstyle=*](8,4)
\psdots[dotstyle=*](8,6.5)
\rput[tl](5.3,4.2){$\alpha$}
\rput[tl](4.5,3.9){$A$}
\rput[tl](7,3.9){$B$}
\rput[tl](8.2,3.9){$C$}
\rput[tl](8.2,5.5){$2\mathsf{Reg}(f)r-\varepsilon$}
\rput[tl](8.2,6.8){$D$}
\end{scriptsize}
\end{pspicture}
\caption{In the figure, $\overline{AC}=2\mathsf{Reg}(f)r,\ \overline{BC}=\varepsilon$ and $\overline{CD}=2\mathsf{Reg}(f)r-\varepsilon$. In the black region there is no a pair of points in $\Gamma_n$ at distance $>2\mathsf{Reg}(f)r-\varepsilon$.}
\end{figure}

Now, for each $i\in\{1,\ldots,|\Gamma|\}$, let $\Gamma_n^i$ be the set of all points $p\in\Gamma_n$ such 
that $2\mathsf{Reg}(f)r-\varepsilon\leq ||p_i-p||\leq 2\mathsf{Reg}(f)r$. We have shown that this set contains at most $16$ points. We erase 
those corresponding to $p_1$, then those corresponding to the $p_i$ with minimal index that survive after 
the first deletion ($i \geq 2$), and so on. At the end, we get the subset $\Gamma_n'$ with the desired properties.
\end{proof}

\vspace{0.2 cm} 

\section{Proof of the Main Theorem}

This section is dedicated to the proof of the Main Theorem. Exploiting the bi-Lipschitz decomposition of Lipschitz regular maps recently introduced by Dymond, Kalu\v{z}a and Kopeck\'a in \cite{checos} together the co-uniformity of $f:\mathcal{D}_{\rho}\to\mathbb{Z}^2$, in this section we show that the Lipschitz-rectifiability of $\mathcal{D}_{\rho}$ under co-uniformly continuous bijections of order $o(r^2)$ induces certain regularity over the positive continuous density $\rho:I^2\to\mathbb{R}$. Thus, the existence of a non-Lipschitz-rectifiable Delone set $\mathcal{D}_{\rho}$ will be a consequence of the existence of a continuous function $\rho$ which does not belong to this regularity class.

\medskip

Let $(A_n)_{n\in\mathbb{N}}$ be a basis for the topology of the unit square $I^2$. As in \cite{checos}, let $\mathcal{E}_{C,L,n}$ be the set of positive continuous functions $\rho:I^2\to\mathbb{R}$ for which the following holds:  there are pairwise disjoint open sets $Y_1,\ldots, Y_N\subset I^2$, $Y_1=A_n$, where $1\leq N\leq C$, an open set $V\subset\mathbb{R}^2$, and a family of $(b(C),L)$-bi-Lipschitz homeomorphisms $F_i:Y_i\to V$ such that 

\begin{equation}\label{linear comb jacobians}
\rho(y)=|Jac(F_1)(y)|-\displaystyle\sum_{i=2}^{n}\rho(F_{i}^{-1}\circ F_1)(y)|Jac(F_{i}^{-1}\circ F_1)(y)| \hspace{1cm} \mbox{a.e in } Y_1.
\end{equation}

In \cite{checos} it is shown that ``almost all" positive continuous functions do not have a bi-Lipschitz decomposition as in (\ref{linear comb jacobians}). This corresponds to Theorem 4.1 in \cite{checos}.

\medskip

\begin{proposition}[Dymond, Kalu\v{z}a and Kopeck\'a, 2018]\label{non realizable}
A generic positive function $\rho:I^2\to\mathbb{R}$ does not belong to $\bigcup_{C,L,n\in\mathbb{N}}\mathcal{E}_{C,L,n}$.
\end{proposition}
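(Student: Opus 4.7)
The plan is to prove the stronger statement that $\bigcup_{C,L,n \in \mathbb{N}} \mathcal{E}_{C,L,n}$ is a meager subset of $C(I^2)$ equipped with the supremum norm; genericity of the complement then follows from the Baire category theorem, since $C(I^2)$ is a complete metric space. Because the union is countable, it suffices to show that each $\mathcal{E}_{C,L,n}$ is nowhere dense, and I would attempt this by proving that each such set is both closed and has empty interior. The decomposition into these two steps is the natural one for a Burago-Kleiner-style non-realizability argument.

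For closedness, I would take a uniformly convergent sequence $\rho_k \to \rho$ with $\rho_k \in \mathcal{E}_{C,L,n}$ and produce a bi-Lipschitz decomposition for $\rho$ by a compactness argument. Each $\rho_k$ comes equipped with $N_k \leq C$ pairwise disjoint domains $Y_1^{(k)} = A_n,\ldots,Y_{N_k}^{(k)}$, a target open set $V^{(k)}$, and $(b(C),L)$-bi-Lipschitz homeomorphisms $F_i^{(k)}\colon Y_i^{(k)} \to V^{(k)}$ satisfying \eqref{linear comb jacobians}. After passing to a subsequence I may assume $N_k = N$ is constant; since the $F_i^{(k)}$ are uniformly Lipschitz with uniform lower bi-Lipschitz constant, Arzel\`a-Ascoli gives subsequential uniform limits on suitable subdomains, and the bi-Lipschitz property is preserved in the limit. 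The identity \eqref{linear comb jacobians} then passes to the limit via weak-$*$ convergence of the Jacobians of uniformly bi-Lipschitz maps, which forces $\rho \in \mathcal{E}_{C,L,n}$.

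For the emptiness of the interior, the task is, given $\rho \in \mathcal{E}_{C,L,n}$ and $\varepsilon > 0$, to construct $\tilde\rho$ with $\|\tilde\rho-\rho\|_\infty < \varepsilon$ and $\tilde\rho \notin \mathcal{E}_{C,L,n}$. In the spirit of \cite{BK}, I would add inside a small ball $B \subset A_n$ a perturbation of amplitude less than $\varepsilon$ that oscillates on arbitrarily fine scales. The key observation is that any candidate decomposition of the form \eqref{linear comb jacobians} expresses $\tilde\rho|_{A_n}$ as a signed sum of at most $C$ Jacobian terms of maps whose bi-Lipschitz constants depend only on $C$ and $L$ (note that each $F_i^{-1}\circ F_1$ is bi-Lipschitz with constants determined by $C$ and $L$). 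Such a finite sum of Jacobians of uniformly bi-Lipschitz maps has controlled oscillation at a rate depending only on $C, L$. Choosing the perturbation to oscillate at a frequency exceeding this rate yields a direct contradiction with \eqref{linear comb jacobians}.

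I expect this last quantitative oscillation bound to be the main obstacle: one has to make precise the heuristic that ``finite signed sums of Jacobians of uniformly bi-Lipschitz maps cannot oscillate arbitrarily fast'' in a way that is robust under the choice of the $N$ unknown domains $Y_i$ and their geometry. This is essentially the non-realizability core of \cite{BK} and \cite{checos}, and any honest proof of the proposition must ultimately rely on such a quantitative statement, which in turn is typically proved via a careful integration-by-parts or stability estimate applied to the almost-everywhere defined derivatives supplied by Rademacher's theorem.
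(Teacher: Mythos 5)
The paper does not prove this proposition: it is imported verbatim as Theorem 4.1 of Dymond--Kalu\v{z}a--Kopeck\'a \cite{checos}, and the sentence immediately preceding the statement in the paper says so explicitly. So there is no in-paper proof to compare your attempt against; you are supplying a proof for a black-box input that the paper simply cites.

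On the merits of your sketch: the Baire-category framework (show each $\mathcal{E}_{C,L,n}$ is closed with empty interior, conclude the countable union is meager) is indeed the strategy of \cite{checos} and of the closely related \cite{V}, so the global outline is right. But the empty-interior step, which you rightly identify as the crux, contains a genuine gap beyond the one you flag. You characterize \eqref{linear comb jacobians} as writing $\tilde\rho|_{A_n}$ ``as a signed sum of at most $C$ Jacobian terms of maps whose bi-Lipschitz constants depend only on $C$ and $L$,'' and then invoke the slogan that finite signed sums of Jacobians of uniformly bi-Lipschitz maps have controlled oscillation. Neither half holds. The terms with $i\geq 2$ are of the form $\rho(F_i^{-1}\circ F_1)\,|Jac(F_i^{-1}\circ F_1)|$, so they are not Jacobians but products of a Jacobian with $\rho$ composed with a bi-Lipschitz map. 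More importantly, the Jacobian of a bi-Lipschitz map is merely an $L^\infty$ function whose pointwise oscillation is \emph{not} controlled by the bi-Lipschitz constants (only its values are pinned into $[b^2,L^2]$ and its averages over regions are constrained). The Burago--Kleiner obstruction is not a modulus-of-continuity bound on the Jacobian; it is a multi-scale geometric argument showing that a prescribed checkerboard of averaged densities forces unbounded displacement of the map. Generalizing it so as to defeat \emph{every} admissible family $\{Y_i\},\{F_i\}$ in \eqref{linear comb jacobians} simultaneously is where the substance of the proof in \cite{checos} lies, and it does not reduce to the quantitative bound you hypothesize. (The closedness step also needs care---the domains $Y_2,\dots,Y_N$ and the target $V$ vary with the sequence and must be prevented from degenerating or translating to infinity---but this is fixable by normalizing and exploiting the uniform lower bi-Lipschitz constant.)
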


\medskip

From now on let $\rho$ be a positive continuous function such that $8/9\leq\min\rho<\max\rho\leq 1$ as in Proposition \ref{non realizable} and let $\mathcal{D}_{\rho}\subset\mathbb{Z}^2$ be the corresponding Delone set constructed as in Section \ref{ssection:anom_delone}. Assume that there is an $L$-Lipschitz, $\omega$-co-uniformly continuous bijection $f:\mathcal{D}_{\rho}\to\mathbb{Z}^2$ for some increasing continuous function $\omega:[0,\infty)\to [0,\infty)$ such that $\omega(r)=o(r^2)$ and let $F:I^2\to\mathbb{R}^2$ be the (limit) Lipschitz regular map obtained as in Section \ref{sec:lipreg}. By Proposition \ref{bilip decom 2} there exist a non-empty open set $W\subset F(I^2)$ and open disjoint subsets $V_1,\ldots, V_N\subset I^2$, where $N\leq \mathsf{Reg}(F)$, such that $\bigcup_{i=1}^{N}V_i=F^{-1}(W)$ and, for each $i\leq i\leq N$, the map $F|_{V_i}:V_i\to W$ is bi-Lipschitz, with lower bi-Lipschitz constant $b=\frac{1}{2\mathsf{Reg}(F)^2}$. Thereupon, the Main Theorem is a consequence of the next proposition.

\medskip

\begin{proposition}\label{local non realizable}
For $\rho, f, F$ and $W$ as in the previous paragraph, let $\mathcal{Q}\subset W$ be a closed ball centred at a point $y_m\in (1/l_m)\mathbb{Z}^2$, for some positive integer $m$. Then we have 

\begin{equation}\label{local non realizable equation}
F_{\#}(\rho\lambda)|_{\mathcal{Q}}=\lambda|_\mathcal{Q}.
\end{equation}
\end{proposition}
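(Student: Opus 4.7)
The natural approach is to compare three Radon measures on $\mathbb{R}^2$: the rescaled Delone counting measure $\mu_n := l_n^{-2}\sum_{x\in\mathcal{R}_n}\delta_x$, its pushforward $\nu_n := (f_n)_{\#}\mu_n$, and the rescaled lattice measure $\sigma_n := l_n^{-2}\sum_{y\in(1/l_n)\mathbb{Z}^2}\delta_y$. A Riemann-sum argument based on the emulation of $\rho$ by $\mathcal{R}_n$ at scale $l_n/m_n$ gives $\mu_n\to\rho\lambda$ weakly on $I^2$; a standard lattice asymptotic gives $\sigma_n\to\lambda$ weakly on $\mathbb{R}^2$; and the uniform convergence $\widehat{f}_n\to F$ on $I^2$ combined with $\mu_n\to\rho\lambda$ yields $\nu_n\to F_{\#}(\rho\lambda)$ weakly. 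Since $\mathcal{Q}\subset W$ is a closed ball and $F$ admits the bi-Lipschitz decomposition of Proposition \ref{bilip decom 2} on $W$, the limit $F_{\#}(\rho\lambda)$ is absolutely continuous on $W$ and $\partial\mathcal{Q}$ carries no mass, so one obtains $\nu_n(\mathcal{Q})\to F_{\#}(\rho\lambda)(\mathcal{Q})$ and $\sigma_n(\mathcal{Q})\to\lambda(\mathcal{Q})$.

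The core of the proof then reduces to the identity $\sigma_n(\mathcal{Q}) - \nu_n(\mathcal{Q}) \to 0$, which is equivalent to showing that the set $\mathcal{M}_n := \bigl((1/l_n)\mathbb{Z}^2\cap\mathcal{Q}\bigr)\setminus f_n(\mathcal{R}_n)$ of \emph{missing} lattice points has cardinality $o(l_n^2)$. A point $y$ lies in $\mathcal{M}_n$ precisely when $f^{-1}(l_n y + c_n)\notin S_n$. Granted this, the two preceding limits immediately give $F_{\#}(\rho\lambda)(\mathcal{Q}) = \lambda(\mathcal{Q})$; applying the same reasoning to every closed sub-ball of $\mathcal{Q}$ centred at a point of $\bigcup_{m'\geq 1}(1/l_{m'})\mathbb{Z}^2\cap\mathcal{Q}$ (a dense family, by the divisibility hypothesis $l_{m'}\mid l_{m'+1}$) and invoking Lebesgue differentiation then upgrades the scalar equality to equality of Radon measures on $\mathcal{Q}$.

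The proof that $|\mathcal{M}_n| = o(l_n^2)$ is where the co-uniformity of order $o(r^2)$ is crucially used. The underlying observation is that if $y_0 \in f_n(\mathcal{R}_n)$ has its preimage at distance $D$ from $\partial S_n$, then the modulus of $f^{-1}$ forces every $y\in(1/l_n)\mathbb{Z}^2$ with $\omega(l_n\|y-y_0\|) < D$ to also lie in $f_n(\mathcal{R}_n)$; each such anchor contributes a ``safety ball'' of radius $\omega^{-1}(D)/l_n$ of guaranteed-hit points. The $2\mathbb{Z}^2$-property furnishes a dense network of candidate anchors throughout $\mathcal{Q}$, and the $o(r^2)$ bound on $\omega$ is precisely the threshold that lets these shrinking safety balls collectively cover all of $\mathcal{Q}$ except a shell of width $\varepsilon_n = o(1)$ around $\partial\mathcal{Q}$. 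Since $\partial\mathcal{Q}$ is a $1$-dimensional curve in $\mathbb{R}^2$, such a shell contains only $O(\varepsilon_n l_n^2) = o(l_n^2)$ lattice points, as required.

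The principal obstacle is exactly this localization argument: the naive rescaled modulus $\omega_n(r) = \omega(l_n r)/l_n$ need not tend to zero for fixed $r > 0$, so no single anchor covers a non-vanishing region, and one must carefully balance the anchor density against the vanishing safety radii to conclude that only a thin boundary shell can contain missing points. This calibration, taking place exactly at the $o(r^d)$ growth rate, is the delicate quantitative heart of the proof, and is the point where the hypothesis $\omega(r) = o(r^2)$ is used in an essential way.
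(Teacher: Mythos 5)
Your high-level reduction is the same as the paper's: both arguments hinge on showing that the set $\mathcal{M}_n$ of lattice points in $\mathcal{Q}\cap(1/l_n)\mathbb{Z}^2$ whose preimages lie outside $S_n$ has cardinality $o(l_n^2)$, and both recognize that such points should be confined to a shell near $\partial\mathcal{Q}$ whose width goes to $0$ because of the $o(r^d)$ bound on the co-uniformity. The cosmetic differences (three-measure bookkeeping, first proving a scalar equality $F_{\#}(\rho\lambda)(\mathcal{Q})=\lambda(\mathcal{Q})$ and then upgrading via dense sub-balls and Lebesgue differentiation, versus the paper's direct weak-convergence test against $\varphi\in C_0$) are legitimate variants.

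However, the core of the argument --- the step establishing $|\mathcal{M}_n|=o(l_n^2)$ --- has a genuine gap. You propose a ``safety ball'' chaining scheme: an anchor $y_0$ whose preimage lies at depth $D$ from $\partial S_n$ guarantees, by co-uniformity, that all lattice points within rescaled radius $\omega^{-1}(l_n D)/l_n$ of $y_0$ are hit; you then assert that the $2\mathbb{Z}^2$-property furnishes a dense network of anchors whose safety balls collectively cover $\mathcal{Q}$ up to a thin shell. There are two problems. First, the claimed density of anchors in the \emph{target} $\mathcal{Q}$ is essentially what needs to be proved; the $2\mathbb{Z}^2$-property is a density statement in the \emph{source} $S_n$, and translating it into density of hit lattice points in $\mathcal{Q}$ without further structure is circular. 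Second, the chaining does not close: if $y_1$ lies at the edge of the safety ball of $y_0$, then co-uniformity only gives $\|f^{-1}(l_ny_1)-f^{-1}(l_ny_0)\|\le\omega(l_n\|y_1-y_0\|)\le l_n D$, so the depth of the new anchor's preimage may drop to $0$ and its safety ball may degenerate. Without a complementary estimate in the other direction, one cannot propagate anchors outward from the single center anchor provided by Remark \ref{remcentpoint}. The hypothesis $\omega(r)=o(r^d)$ alone is not enough to run this argument.

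What is missing is precisely the mechanism the paper supplies through Lemmas \ref{claim1}, \ref{claim2}, \ref{claim4}: one introduces the nested shrinking balls $\mathcal{Q}_{k,j}$ of side decreasing in steps of $L/l_n$, takes the preimage point $x_{k,j}^{(n)}$ of maximal norm for each $j$, and shows --- using the $L$-\emph{Lipschitz} bound together with the $2\mathbb{Z}^2$-property, an ingredient absent from your outline --- that $f_n(x_{k,j}^{(n)})$ must be within $L/l_n$ of $\partial\mathcal{Q}_{k,j}$ (Lemma \ref{claim1}), that the norms $\|x_{k,j}^{(n)}\|$ strictly decrease (Lemma \ref{claim2}), and that the annular preimage regions map into the successive shells $\mathcal{Q}_{k,j-1}\setminus\mathcal{Q}_{k,j}$ (Lemma \ref{claim4}). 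This localizes all missing lattice points inside $\mathcal{Q}_{k,0}\setminus\mathcal{Q}_{k,j(n_k)}$, and only \emph{then} does the co-uniformity enter, bounding $j(n_k)$ through $\|x_{k,0}\|\lesssim\omega((\cdot)l_{n_k})/l_{n_k}$, so that the shell width $\sim j(n_k)L/l_{n_k}\lesssim L\,\omega((\cdot)l_{n_k})/l_{n_k}^2$ vanishes exactly under $\omega(r)=o(r^2)$. Your proposal captures the endpoint of this calculation but not the geometric propagation that makes it legitimate, and it leaves the Lipschitz hypothesis entirely unused in the crucial step.
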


\begin{proof}[Proof of Main Theorem from Proposition \ref{local non realizable}]
For every $i=1,\ldots,N$ write $F_i:=F|_{V_i}$ and let $n$ be a natural number such that $A_n\subset V_1\cap F^{-1}(\mathcal{Q})$; besides, for each $i=2,\ldots, N$ denote by $A_{i,n}:=F^{-1}(F(A_n))\cap V_i$ and $A_{1,n}:=A_n$. From the equation (\ref{local non realizable equation}) we have that

\begin{equation*}
\displaystyle\sum_{i=1}^{N}\int_{A_{i,n}}\rho d\lambda=\lambda(F(A_n)).
\end{equation*}

By a change of variable and the Euclidean Area formula for bi-Lipschitz maps, the previous equation can be rewritten as

\begin{equation*}
\displaystyle\int_{A_n}\sum_{i=1}^{N}\rho(F_i^{-1}\circ F_1)|Jac(F_i^{-1}\circ F_1)| d\lambda=\displaystyle\int_{A_n}|Jac(F_1)|d\lambda,
\end{equation*}

which is equivalent to the equation

\begin{equation*}
\rho(y)=|Jac(F_1)(y)|-\displaystyle\sum_{i=2}^{n}\rho(F_{i}^{-1}\circ F_1)(y)|Jac(F_{i}^{-1}\circ F_1)(y)| \hspace{1cm} \mbox{a.e in } A_n.
\end{equation*}

Thus, we conclude that $\rho\in\mathcal{E}_{C,L,n\in\mathbb{N}}$ for $C=1/2\mathsf{Reg}(F)^2$, which contradicts the choice of $\rho$. Therefore, $\mathcal{D}_{\rho}$ cannot be mapped onto $\mathbb{Z}^2$ by Lipschitz bijections, as announced.
\end{proof} 

\subsection{Mass-loss control under renormalization}

In what follows we prove Proposition \ref{local non realizable}. Let $\mathcal{Q}\subset W$ be a closed ball such that $d(\mathcal{Q},\partial W)>0$ and define $\mathcal{H}_i:=F^{-1}(\mathcal{Q})\cap V_i$ (recall that we consider ``balls" for the sup-norm in $\mathbb{R}^2$) and $\mathcal{H}:=\cup_{i=1}^N \mathcal{H}_i$; in addition, choose $\varepsilon>0$ such that the closure of the $\varepsilon$-neighbourhood $B(\mathcal{Q},\varepsilon)$ of $\mathcal{Q}$ is contained in $W$. We start by showing that from a large-enough $n\in\mathbb{N}$ the points in $\mathcal{R}_n\cap\mathcal{H}$ are mapped under $f_n$ into $B(\mathcal{Q},\varepsilon)$, and that $f_n^{-1}(\mathcal{Q}\cap (1/l_n)\mathbb{Z}^2)$ is completely contained in the $\varepsilon$-neighbourhood of $\mathcal{H}$ (with $f_n$ defined as in (\ref{normalization})); nevertheless, there may exist some points in $\mathcal{Q}\cap(1/l_n)\mathbb{Z}^2$ which do not have a pre-image under $f_n$, producing loss of mass under the  renormalization (see Figure 4).

 
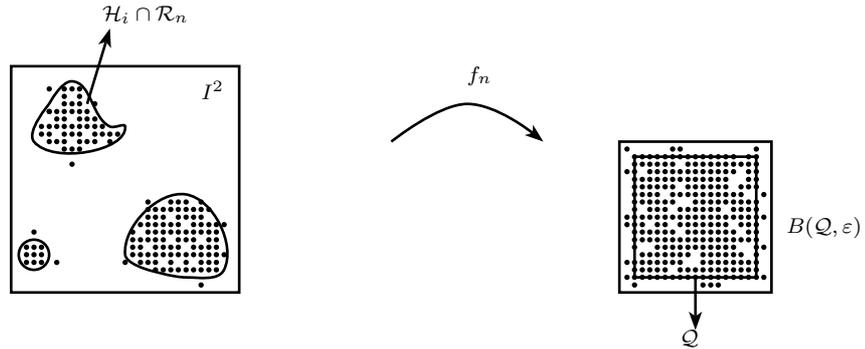
\begin{figure}[h!]\label{fig:lossmass}
\psset{xunit=1cm,yunit=1cm,algebraic=true,dimen=middle,dotstyle=o,dotsize=5pt 0,linewidth=2pt,arrowsize=3pt 2,arrowinset=0.25}
\begin{pspicture}(-2,0)(12,5.5)
\pspolygon[linewidth=1pt](2,1)(2,4)(5,4)(5,1)
\pspolygon[linewidth=1pt](10,3)(10,1)(12,1)(12,3)
\pspolygon[linewidth=1pt](10.2,2.8)(10.2,1.2)(11.8,1.2)(11.8,2.8)

\pscurve[linewidth=1pt](2.5,3.5)(2.8,3.8)(3.3,3.2)(3.5,3.2)(2.3,3)(2.5,3.5)
\pscurve[linewidth=1pt](3.8,1.3)(4.3,1.2)(4.8,1.3)(4.3,2.3)(3.5,1.5)(3.8,1.3)
\pscircle[linewidth=1pt](2.3,1.5){0,2}
\begin{scriptsize}
\pscurve[linewidth=1pt]{->}(7,3)(8,3.5)(9,3)
\rput[tl](8,4){$f_n$}
\psline[linewidth=1pt]{->}(3,3.5)(3.3,4.5)
\rput[tl](3.2,4.8){$\mathcal{H}_i\cap\mathcal{R}_{n}$}
\psdots[dotsize=2pt 0,dotstyle=*](2.5,3)
\psdots[dotsize=2pt 0,dotstyle=*](2.7,3)
\psdots[dotsize=2pt 0,dotstyle=*](2.9,3)
\psdots[dotsize=2pt 0,dotstyle=*](3.1,3)
\psdots[dotsize=2pt 0,dotstyle=*](2.5,3.2)
\psdots[dotsize=2pt 0,dotstyle=*](2.7,3.2)
\psdots[dotsize=2pt 0,dotstyle=*](2.9,3.2)
\psdots[dotsize=2pt 0,dotstyle=*](3.1,3.2)
\psdots[dotsize=2pt 0,dotstyle=*](2.5,3.4)
\psdots[dotsize=2pt 0,dotstyle=*](2.7,3.4)
\psdots[dotsize=2pt 0,dotstyle=*](2.9,3.4)
\psdots[dotsize=2pt 0,dotstyle=*](2.7,3.6)
\psdots[dotsize=2pt 0,dotstyle=*](2.9,3.6)
\psdots[dotsize=2pt 0,dotstyle=*](2.6,3)
\psdots[dotsize=2pt 0,dotstyle=*](2.5,3.1)
\psdots[dotsize=2pt 0,dotstyle=*](2.7,3.1)
\psdots[dotsize=2pt 0,dotstyle=*](2.6,3.2)
\psdots[dotsize=2pt 0,dotstyle=*](3,3)
\psdots[dotsize=2pt 0,dotstyle=*](2.8,3.2)
\psdots[dotsize=2pt 0,dotstyle=*](2.8,3.1)
\psdots[dotsize=2pt 0,dotstyle=*](2.5,3.4)
\psdots[dotsize=2pt 0,dotstyle=*](2.9,3.1)
\psdots[dotsize=2pt 0,dotstyle=*](3,3.2)
\psdots[dotsize=2pt 0,dotstyle=*](3.1,3.1)
\psdots[dotsize=2pt 0,dotstyle=*](3.2,3.1)
\psdots[dotsize=2pt 0,dotstyle=*](3.2,3.2)
\psdots[dotsize=2pt 0,dotstyle=*](3.3,3.1)
\psdots[dotsize=2pt 0,dotstyle=*](3.3,3)
\psdots[dotsize=2pt 0,dotstyle=*](2.4,3.1)
\psdots[dotsize=2pt 0,dotstyle=*](2.4,3.2)
\psdots[dotsize=2pt 0,dotstyle=*](3.4,3.1)
\psdots[dotsize=2pt 0,dotstyle=*](2.6,3.3)
\psdots[dotsize=2pt 0,dotstyle=*](2.7,3.3)
\psdots[dotsize=2pt 0,dotstyle=*](2.6,3.4)
\psdots[dotsize=2pt 0,dotstyle=*](2.8,3.4)
\psdots[dotsize=2pt 0,dotstyle=*](2.9,3.3)
\psdots[dotsize=2pt 0,dotstyle=*](3,3.3)
\psdots[dotsize=2pt 0,dotstyle=*](3,3.4)
\psdots[dotsize=2pt 0,dotstyle=*](3.1,3.3)
\psdots[dotsize=2pt 0,dotstyle=*](2.7,3.5)
\psdots[dotsize=2pt 0,dotstyle=*](2.8,3.5)
\psdots[dotsize=2pt 0,dotstyle=*](2.9,3.5)
\psdots[dotsize=2pt 0,dotstyle=*](3.1,3.5)
\psdots[dotsize=2pt 0,dotstyle=*](2.8,3.7)
\psdots[dotsize=2pt 0,dotstyle=*](2.7,3.7)
\psdots[dotsize=2pt 0,dotstyle=*](2.9,3.7)
\psdots[dotsize=2pt 0,dotstyle=*](2.5,3.7)
\psdots[dotsize=2pt 0,dotstyle=*](2.8,2.9)
\psdots[dotsize=2pt 0,dotstyle=*](2.9,2.9)
\psdots[dotsize=2pt 0,dotstyle=*](2.7,2.9)
\psdots[dotsize=2pt 0,dotstyle=*](2.8,2.7)

\psdots[dotsize=2pt 0,dotstyle=*](2.2,1.4)
\psdots[dotsize=2pt 0,dotstyle=*](2.3,1.4)
\psdots[dotsize=2pt 0,dotstyle=*](2.4,1.4)
\psdots[dotsize=2pt 0,dotstyle=*](2.6,1.4)
\psdots[dotsize=2pt 0,dotstyle=*](2.2,1.5)
\psdots[dotsize=2pt 0,dotstyle=*](2.3,1.5)
\psdots[dotsize=2pt 0,dotstyle=*](2.4,1.5)
\psdots[dotsize=2pt 0,dotstyle=*](2.2,1.6)
\psdots[dotsize=2pt 0,dotstyle=*](2.3,1.6)
\psdots[dotsize=2pt 0,dotstyle=*](2.4,1.6)
\psdots[dotsize=2pt 0,dotstyle=*](2.3,1.8)
\psdots[dotsize=2pt 0,dotstyle=*](4,1.3)
\psdots[dotsize=2pt 0,dotstyle=*](4.1,1.3)
\psdots[dotsize=2pt 0,dotstyle=*](4.2,1.3)
\psdots[dotsize=2pt 0,dotstyle=*](4.3,1.3)
\psdots[dotsize=2pt 0,dotstyle=*](4.4,1.3)
\psdots[dotsize=2pt 0,dotstyle=*](4.5,1.3)
\psdots[dotsize=2pt 0,dotstyle=*](4.6,1.3)
\psdots[dotsize=2pt 0,dotstyle=*](4.7,1.3)
\psdots[dotsize=2pt 0,dotstyle=*](4.1,1.4)
\psdots[dotsize=2pt 0,dotstyle=*](4.3,1.4)
\psdots[dotsize=2pt 0,dotstyle=*](4.5,1.4)
\psdots[dotsize=2pt 0,dotstyle=*](4.6,1.4)
\psdots[dotsize=2pt 0,dotstyle=*](4.7,1.4)
\psdots[dotsize=2pt 0,dotstyle=*](4,1.5)
\psdots[dotsize=2pt 0,dotstyle=*](4.1,1.5)
\psdots[dotsize=2pt 0,dotstyle=*](4.2,1.5)
\psdots[dotsize=2pt 0,dotstyle=*](4.3,1.5)
\psdots[dotsize=2pt 0,dotstyle=*](4.4,1.5)
\psdots[dotsize=2pt 0,dotstyle=*](4.5,1.5)
\psdots[dotsize=2pt 0,dotstyle=*](4.6,1.5)
\psdots[dotsize=2pt 0,dotstyle=*](4.7,1.5)
\psdots[dotsize=2pt 0,dotstyle=*](4.8,1.5)
\psdots[dotsize=2pt 0,dotstyle=*](4,1.6)
\psdots[dotsize=2pt 0,dotstyle=*](4.1,1.6)
\psdots[dotsize=2pt 0,dotstyle=*](4.2,1.6)
\psdots[dotsize=2pt 0,dotstyle=*](4.4,1.6)
\psdots[dotsize=2pt 0,dotstyle=*](4.5,1.6)
\psdots[dotsize=2pt 0,dotstyle=*](4.6,1.6)
\psdots[dotsize=2pt 0,dotstyle=*](4.7,1.6)
\psdots[dotsize=2pt 0,dotstyle=*](4,1.7)
\psdots[dotsize=2pt 0,dotstyle=*](4.2,1.7)
\psdots[dotsize=2pt 0,dotstyle=*](4.3,1.7)
\psdots[dotsize=2pt 0,dotstyle=*](4.4,1.7)
\psdots[dotsize=2pt 0,dotstyle=*](4.6,1.7)
\psdots[dotsize=2pt 0,dotstyle=*](4.7,1.7)
\psdots[dotsize=2pt 0,dotstyle=*](4,1.8)
\psdots[dotsize=2pt 0,dotstyle=*](4.1,1.8)
\psdots[dotsize=2pt 0,dotstyle=*](4.2,1.8)
\psdots[dotsize=2pt 0,dotstyle=*](4.3,1.8)
\psdots[dotsize=2pt 0,dotstyle=*](4.4,1.8)
\psdots[dotsize=2pt 0,dotstyle=*](4.5,1.8)
\psdots[dotsize=2pt 0,dotstyle=*](4.6,1.8)
\psdots[dotsize=2pt 0,dotstyle=*](4.7,1.8)
\psdots[dotsize=2pt 0,dotstyle=*](4,1.9)
\psdots[dotsize=2pt 0,dotstyle=*](4.1,1.9)
\psdots[dotsize=2pt 0,dotstyle=*](4.2,1.9)
\psdots[dotsize=2pt 0,dotstyle=*](4.4,1.9)
\psdots[dotsize=2pt 0,dotstyle=*](4.5,1.9)
\psdots[dotsize=2pt 0,dotstyle=*](4.6,1.9)
\psdots[dotsize=2pt 0,dotstyle=*](4.7,1.9)
\psdots[dotsize=2pt 0,dotstyle=*](4,2)
\psdots[dotsize=2pt 0,dotstyle=*](4.2,2)
\psdots[dotsize=2pt 0,dotstyle=*](4.3,2)
\psdots[dotsize=2pt 0,dotstyle=*](4.4,2)
\psdots[dotsize=2pt 0,dotstyle=*](4.5,2)
\psdots[dotsize=2pt 0,dotstyle=*](4,2.1)
\psdots[dotsize=2pt 0,dotstyle=*](4.1,2.1)
\psdots[dotsize=2pt 0,dotstyle=*](4.2,2.1)
\psdots[dotsize=2pt 0,dotstyle=*](4.3,2.1)
\psdots[dotsize=2pt 0,dotstyle=*](4.4,2.1)
\psdots[dotsize=2pt 0,dotstyle=*](4.5,2.1)
\psdots[dotsize=2pt 0,dotstyle=*](4.6,2.1)
\psdots[dotsize=2pt 0,dotstyle=*](4.1,2.2)
\psdots[dotsize=2pt 0,dotstyle=*](4.2,2.2)
\psdots[dotsize=2pt 0,dotstyle=*](4.3,2.2)
\psdots[dotsize=2pt 0,dotstyle=*](4.5,2.2)
\psdots[dotsize=2pt 0,dotstyle=*](3.9,1.3)
\psdots[dotsize=2pt 0,dotstyle=*](3.9,1.4)
\psdots[dotsize=2pt 0,dotstyle=*](3.9,1.5)
\psdots[dotsize=2pt 0,dotstyle=*](3.9,1.6)
\psdots[dotsize=2pt 0,dotstyle=*](3.9,1.7)
\psdots[dotsize=2pt 0,dotstyle=*](3.9,1.8)
\psdots[dotsize=2pt 0,dotstyle=*](3.9,1.9)
\psdots[dotsize=2pt 0,dotstyle=*](3.9,2)
\psdots[dotsize=2pt 0,dotstyle=*](3.9,2.1)
\psdots[dotsize=2pt 0,dotstyle=*](3.8,1.4)
\psdots[dotsize=2pt 0,dotstyle=*](3.8,1.5)
\psdots[dotsize=2pt 0,dotstyle=*](3.8,1.7)
\psdots[dotsize=2pt 0,dotstyle=*](3.8,1.8)
\psdots[dotsize=2pt 0,dotstyle=*](3.8,1.9)
\psdots[dotsize=2pt 0,dotstyle=*](3.8,2)
\psdots[dotsize=2pt 0,dotstyle=*](3.8,2.1)
\psdots[dotsize=2pt 0,dotstyle=*](3.8,2.2)
\psdots[dotsize=2pt 0,dotstyle=*](3.7,1.4)
\psdots[dotsize=2pt 0,dotstyle=*](3.7,1.5)
\psdots[dotsize=2pt 0,dotstyle=*](3.7,1.6)
\psdots[dotsize=2pt 0,dotstyle=*](3.7,1.7)
\psdots[dotsize=2pt 0,dotstyle=*](3.7,1.9)
\psdots[dotsize=2pt 0,dotstyle=*](3.6,1.5)
\psdots[dotsize=2pt 0,dotstyle=*](3.6,1.6)
\psdots[dotsize=2pt 0,dotstyle=*](3.6,1.7)
\psdots[dotsize=2pt 0,dotstyle=*](3.6,1.8)
\psdots[dotsize=2pt 0,dotstyle=*](4.5,1.1)
\psdots[dotsize=2pt 0,dotstyle=*](4.8,1.9)
\psdots[dotsize=2pt 0,dotstyle=*](3.5,1.4)
\rput[tl](4.5,3.8){$I^2$}
\rput[tl](12.2,2){$B(\mathcal{Q},\varepsilon)$}
\psline[linewidth=1pt]{->}(11,1.2)(11,0.5)
\rput[tl](10.8,0.5){$\mathcal{Q}$}

\psdots[dotsize=2pt 0,dotstyle=*](10.2,1.1)
\psdots[dotsize=2pt 0,dotstyle=*](11.1,1.1)
\psdots[dotsize=2pt 0,dotstyle=*](11.2,1.1)
\psdots[dotsize=2pt 0,dotstyle=*](11.3,1.1)

\psdots[dotsize=2pt 0,dotstyle=*](10.1,1.2)
\psdots[dotsize=2pt 0,dotstyle=*](10.2,1.2)
\psdots[dotsize=2pt 0,dotstyle=*](10.3,1.2)
\psdots[dotsize=2pt 0,dotstyle=*](10.4,1.2)
\psdots[dotsize=2pt 0,dotstyle=*](10.5,1.2)
\psdots[dotsize=2pt 0,dotstyle=*](10.6,1.2)
\psdots[dotsize=2pt 0,dotstyle=*](10.7,1.2)
\psdots[dotsize=2pt 0,dotstyle=*](10.8,1.2)
\psdots[dotsize=2pt 0,dotstyle=*](10.9,1.2)
\psdots[dotsize=2pt 0,dotstyle=*](11,1.2)
\psdots[dotsize=2pt 0,dotstyle=*](11.1,1.2)
\psdots[dotsize=2pt 0,dotstyle=*](11.2,1.2)
\psdots[dotsize=2pt 0,dotstyle=*](11.3,1.2)
\psdots[dotsize=2pt 0,dotstyle=*](11.4,1.2)
\psdots[dotsize=2pt 0,dotstyle=*](11.5,1.2)
\psdots[dotsize=2pt 0,dotstyle=*](11.6,1.2)
\psdots[dotsize=2pt 0,dotstyle=*](11.7,1.2)
\psdots[dotsize=2pt 0,dotstyle=*](11.8,1.2)
\psdots[dotsize=2pt 0,dotstyle=*](11.9,1.2)

\psdots[dotsize=2pt 0,dotstyle=*](10.2,1.3)
\psdots[dotsize=2pt 0,dotstyle=*](10.3,1.3)
\psdots[dotsize=2pt 0,dotstyle=*](10.4,1.3)
\psdots[dotsize=2pt 0,dotstyle=*](10.5,1.3)
\psdots[dotsize=2pt 0,dotstyle=*](10.6,1.3)
\psdots[dotsize=2pt 0,dotstyle=*](10.8,1.3)
\psdots[dotsize=2pt 0,dotstyle=*](10.9,1.3)
\psdots[dotsize=2pt 0,dotstyle=*](11,1.3)
\psdots[dotsize=2pt 0,dotstyle=*](11.1,1.3)
\psdots[dotsize=2pt 0,dotstyle=*](11.2,1.3)
\psdots[dotsize=2pt 0,dotstyle=*](11.3,1.3)
\psdots[dotsize=2pt 0,dotstyle=*](11.4,1.3)
\psdots[dotsize=2pt 0,dotstyle=*](11.5,1.3)
\psdots[dotsize=2pt 0,dotstyle=*](11.6,1.3)
\psdots[dotsize=2pt 0,dotstyle=*](11.7,1.3)
\psdots[dotsize=2pt 0,dotstyle=*](11.8,1.3)

\psdots[dotsize=2pt 0,dotstyle=*](10.1,1.4)
\psdots[dotsize=2pt 0,dotstyle=*](10.2,1.4)
\psdots[dotsize=2pt 0,dotstyle=*](10.3,1.4)
\psdots[dotsize=2pt 0,dotstyle=*](10.4,1.4)
\psdots[dotsize=2pt 0,dotstyle=*](10.5,1.4)
\psdots[dotsize=2pt 0,dotstyle=*](10.6,1.4)
\psdots[dotsize=2pt 0,dotstyle=*](10.7,1.4)
\psdots[dotsize=2pt 0,dotstyle=*](10.8,1.4)
\psdots[dotsize=2pt 0,dotstyle=*](11.1,1.4)
\psdots[dotsize=2pt 0,dotstyle=*](11.2,1.4)
\psdots[dotsize=2pt 0,dotstyle=*](11.3,1.4)
\psdots[dotsize=2pt 0,dotstyle=*](11.4,1.4)
\psdots[dotsize=2pt 0,dotstyle=*](11.5,1.4)
\psdots[dotsize=2pt 0,dotstyle=*](11.6,1.4)
\psdots[dotsize=2pt 0,dotstyle=*](11.7,1.4)
\psdots[dotsize=2pt 0,dotstyle=*](11.8,1.4)

\psdots[dotsize=2pt 0,dotstyle=*](10.3,1.5)
\psdots[dotsize=2pt 0,dotstyle=*](10.4,1.5)
\psdots[dotsize=2pt 0,dotstyle=*](10.5,1.5)
\psdots[dotsize=2pt 0,dotstyle=*](10.6,1.5)
\psdots[dotsize=2pt 0,dotstyle=*](10.7,1.5)
\psdots[dotsize=2pt 0,dotstyle=*](10.8,1.5)
\psdots[dotsize=2pt 0,dotstyle=*](10.9,1.5)
\psdots[dotsize=2pt 0,dotstyle=*](11.1,1.5)
\psdots[dotsize=2pt 0,dotstyle=*](11.2,1.5)
\psdots[dotsize=2pt 0,dotstyle=*](11.3,1.5)
\psdots[dotsize=2pt 0,dotstyle=*](11.4,1.5)
\psdots[dotsize=2pt 0,dotstyle=*](11.5,1.5)
\psdots[dotsize=2pt 0,dotstyle=*](11.6,1.5)
\psdots[dotsize=2pt 0,dotstyle=*](11.8,1.5)

\psdots[dotsize=2pt 0,dotstyle=*](10.2,1.6)
\psdots[dotsize=2pt 0,dotstyle=*](10.5,1.6)
\psdots[dotsize=2pt 0,dotstyle=*](10.6,1.6)
\psdots[dotsize=2pt 0,dotstyle=*](10.7,1.6)
\psdots[dotsize=2pt 0,dotstyle=*](10.8,1.6)
\psdots[dotsize=2pt 0,dotstyle=*](10.9,1.6)
\psdots[dotsize=2pt 0,dotstyle=*](11,1.6)
\psdots[dotsize=2pt 0,dotstyle=*](11.1,1.6)
\psdots[dotsize=2pt 0,dotstyle=*](11.2,1.6)
\psdots[dotsize=2pt 0,dotstyle=*](11.3,1.6)
\psdots[dotsize=2pt 0,dotstyle=*](11.4,1.6)
\psdots[dotsize=2pt 0,dotstyle=*](11.5,1.6)
\psdots[dotsize=2pt 0,dotstyle=*](11.8,1.6)
\psdots[dotsize=2pt 0,dotstyle=*](11.9,1.6)

\psdots[dotsize=2pt 0,dotstyle=*](10.2,1.7)
\psdots[dotsize=2pt 0,dotstyle=*](10.3,1.7)
\psdots[dotsize=2pt 0,dotstyle=*](10.4,1.7)
\psdots[dotsize=2pt 0,dotstyle=*](10.5,1.7)
\psdots[dotsize=2pt 0,dotstyle=*](10.6,1.7)
\psdots[dotsize=2pt 0,dotstyle=*](10.7,1.7)
\psdots[dotsize=2pt 0,dotstyle=*](10.8,1.7)
\psdots[dotsize=2pt 0,dotstyle=*](10.9,1.7)
\psdots[dotsize=2pt 0,dotstyle=*](11,1.7)
\psdots[dotsize=2pt 0,dotstyle=*](11.1,1.7)
\psdots[dotsize=2pt 0,dotstyle=*](11.2,1.7)
\psdots[dotsize=2pt 0,dotstyle=*](11.3,1.7)
\psdots[dotsize=2pt 0,dotstyle=*](11.4,1.7)
\psdots[dotsize=2pt 0,dotstyle=*](11.5,1.7)
\psdots[dotsize=2pt 0,dotstyle=*](11.6,1.7)
\psdots[dotsize=2pt 0,dotstyle=*](11.7,1.7)
\psdots[dotsize=2pt 0,dotstyle=*](11.8,1.7)

\psdots[dotsize=2pt 0,dotstyle=*](10.2,1.8)
\psdots[dotsize=2pt 0,dotstyle=*](10.3,1.8)
\psdots[dotsize=2pt 0,dotstyle=*](10.4,1.8)
\psdots[dotsize=2pt 0,dotstyle=*](10.5,1.8)
\psdots[dotsize=2pt 0,dotstyle=*](10.6,1.8)
\psdots[dotsize=2pt 0,dotstyle=*](10.7,1.8)
\psdots[dotsize=2pt 0,dotstyle=*](10.9,1.8)
\psdots[dotsize=2pt 0,dotstyle=*](11,1.8)
\psdots[dotsize=2pt 0,dotstyle=*](11.1,1.8)
\psdots[dotsize=2pt 0,dotstyle=*](11.2,1.8)
\psdots[dotsize=2pt 0,dotstyle=*](11.3,1.8)
\psdots[dotsize=2pt 0,dotstyle=*](11.4,1.8)
\psdots[dotsize=2pt 0,dotstyle=*](11.5,1.8)
\psdots[dotsize=2pt 0,dotstyle=*](11.7,1.8)
\psdots[dotsize=2pt 0,dotstyle=*](11.8,1.8)

\psdots[dotsize=2pt 0,dotstyle=*](10.1,1.9)
\psdots[dotsize=2pt 0,dotstyle=*](10.2,1.9)
\psdots[dotsize=2pt 0,dotstyle=*](10.3,1.9)
\psdots[dotsize=2pt 0,dotstyle=*](10.5,1.9)
\psdots[dotsize=2pt 0,dotstyle=*](10.6,1.9)
\psdots[dotsize=2pt 0,dotstyle=*](10.8,1.9)
\psdots[dotsize=2pt 0,dotstyle=*](10.9,1.9)
\psdots[dotsize=2pt 0,dotstyle=*](11,1.9)
\psdots[dotsize=2pt 0,dotstyle=*](11.1,1.9)
\psdots[dotsize=2pt 0,dotstyle=*](11.2,1.9)
\psdots[dotsize=2pt 0,dotstyle=*](11.3,1.9)
\psdots[dotsize=2pt 0,dotstyle=*](11.4,1.9)
\psdots[dotsize=2pt 0,dotstyle=*](11.6,1.9)
\psdots[dotsize=2pt 0,dotstyle=*](11.7,1.9)
\psdots[dotsize=2pt 0,dotstyle=*](11.8,1.9)

\psdots[dotsize=2pt 0,dotstyle=*](10.1,2)
\psdots[dotsize=2pt 0,dotstyle=*](10.2,2)
\psdots[dotsize=2pt 0,dotstyle=*](10.3,2)
\psdots[dotsize=2pt 0,dotstyle=*](10.4,2)
\psdots[dotsize=2pt 0,dotstyle=*](10.5,2)
\psdots[dotsize=2pt 0,dotstyle=*](10.6,2)
\psdots[dotsize=2pt 0,dotstyle=*](10.7,2)
\psdots[dotsize=2pt 0,dotstyle=*](10.8,2)
\psdots[dotsize=2pt 0,dotstyle=*](10.9,2)
\psdots[dotsize=2pt 0,dotstyle=*](11,2)
\psdots[dotsize=2pt 0,dotstyle=*](11.1,2)
\psdots[dotsize=2pt 0,dotstyle=*](11.2,2)
\psdots[dotsize=2pt 0,dotstyle=*](11.3,2)
\psdots[dotsize=2pt 0,dotstyle=*](11.4,2)
\psdots[dotsize=2pt 0,dotstyle=*](11.5,2)
\psdots[dotsize=2pt 0,dotstyle=*](11.6,2)
\psdots[dotsize=2pt 0,dotstyle=*](11.7,2)
\psdots[dotsize=2pt 0,dotstyle=*](11.8,2)
\psdots[dotsize=2pt 0,dotstyle=*](11.9,2)

\psdots[dotsize=2pt 0,dotstyle=*](10.2,2.1)
\psdots[dotsize=2pt 0,dotstyle=*](10.4,2.1)
\psdots[dotsize=2pt 0,dotstyle=*](10.5,2.1)
\psdots[dotsize=2pt 0,dotstyle=*](10.6,2.1)
\psdots[dotsize=2pt 0,dotstyle=*](10.8,2.1)
\psdots[dotsize=2pt 0,dotstyle=*](10.9,2.1)
\psdots[dotsize=2pt 0,dotstyle=*](11,2.1)
\psdots[dotsize=2pt 0,dotstyle=*](11.1,2.1)
\psdots[dotsize=2pt 0,dotstyle=*](11.2,2.1)
\psdots[dotsize=2pt 0,dotstyle=*](11.3,2.1)
\psdots[dotsize=2pt 0,dotstyle=*](11.5,2.1)
\psdots[dotsize=2pt 0,dotstyle=*](11.6,2.1)
\psdots[dotsize=2pt 0,dotstyle=*](11.7,2.1)
\psdots[dotsize=2pt 0,dotstyle=*](11.8,2.1)

\psdots[dotsize=2pt 0,dotstyle=*](10.2,2.2)
\psdots[dotsize=2pt 0,dotstyle=*](10.3,2.2)
\psdots[dotsize=2pt 0,dotstyle=*](10.4,2.2)
\psdots[dotsize=2pt 0,dotstyle=*](10.5,2.2)
\psdots[dotsize=2pt 0,dotstyle=*](10.6,2.2)
\psdots[dotsize=2pt 0,dotstyle=*](10.7,2.2)
\psdots[dotsize=2pt 0,dotstyle=*](11,2.2)
\psdots[dotsize=2pt 0,dotstyle=*](11.1,2.2)
\psdots[dotsize=2pt 0,dotstyle=*](11.2,2.2)
\psdots[dotsize=2pt 0,dotstyle=*](11.3,2.2)
\psdots[dotsize=2pt 0,dotstyle=*](11.4,2.2)
\psdots[dotsize=2pt 0,dotstyle=*](11.5,2.2)
\psdots[dotsize=2pt 0,dotstyle=*](11.6,2.2)
\psdots[dotsize=2pt 0,dotstyle=*](11.7,2.2)
\psdots[dotsize=2pt 0,dotstyle=*](11.8,2.2)

\psdots[dotsize=2pt 0,dotstyle=*](10.2,2.3)
\psdots[dotsize=2pt 0,dotstyle=*](10.3,2.3)
\psdots[dotsize=2pt 0,dotstyle=*](10.4,2.3)
\psdots[dotsize=2pt 0,dotstyle=*](10.5,2.3)
\psdots[dotsize=2pt 0,dotstyle=*](10.7,2.3)
\psdots[dotsize=2pt 0,dotstyle=*](10.8,2.3)
\psdots[dotsize=2pt 0,dotstyle=*](10.9,2.3)
\psdots[dotsize=2pt 0,dotstyle=*](11,2.3)
\psdots[dotsize=2pt 0,dotstyle=*](11.1,2.3)
\psdots[dotsize=2pt 0,dotstyle=*](11.2,2.3)
\psdots[dotsize=2pt 0,dotstyle=*](11.3,2.3)
\psdots[dotsize=2pt 0,dotstyle=*](11.4,2.3)
\psdots[dotsize=2pt 0,dotstyle=*](11.5,2.3)
\psdots[dotsize=2pt 0,dotstyle=*](11.6,2.3)
\psdots[dotsize=2pt 0,dotstyle=*](11.9,2.3)

\psdots[dotsize=2pt 0,dotstyle=*](10.2,2.4)
\psdots[dotsize=2pt 0,dotstyle=*](10.3,2.4)
\psdots[dotsize=2pt 0,dotstyle=*](10.4,2.4)
\psdots[dotsize=2pt 0,dotstyle=*](10.5,2.4)
\psdots[dotsize=2pt 0,dotstyle=*](10.6,2.4)
\psdots[dotsize=2pt 0,dotstyle=*](10.7,2.4)
\psdots[dotsize=2pt 0,dotstyle=*](10.9,2.4)
\psdots[dotsize=2pt 0,dotstyle=*](11,2.4)
\psdots[dotsize=2pt 0,dotstyle=*](11.1,2.4)
\psdots[dotsize=2pt 0,dotstyle=*](11.2,2.4)
\psdots[dotsize=2pt 0,dotstyle=*](11.3,2.4)
\psdots[dotsize=2pt 0,dotstyle=*](11.4,2.4)
\psdots[dotsize=2pt 0,dotstyle=*](11.6,2.4)
\psdots[dotsize=2pt 0,dotstyle=*](11.7,2.4)
\psdots[dotsize=2pt 0,dotstyle=*](11.8,2.4)

\psdots[dotsize=2pt 0,dotstyle=*](10.2,2.5)
\psdots[dotsize=2pt 0,dotstyle=*](10.3,2.5)
\psdots[dotsize=2pt 0,dotstyle=*](10.4,2.5)
\psdots[dotsize=2pt 0,dotstyle=*](10.5,2.5)
\psdots[dotsize=2pt 0,dotstyle=*](10.6,2.5)
\psdots[dotsize=2pt 0,dotstyle=*](10.7,2.5)
\psdots[dotsize=2pt 0,dotstyle=*](10.8,2.5)
\psdots[dotsize=2pt 0,dotstyle=*](10.9,2.5)
\psdots[dotsize=2pt 0,dotstyle=*](11,2.5)
\psdots[dotsize=2pt 0,dotstyle=*](11.1,2.5)
\psdots[dotsize=2pt 0,dotstyle=*](11.2,2.5)
\psdots[dotsize=2pt 0,dotstyle=*](11.3,2.5)
\psdots[dotsize=2pt 0,dotstyle=*](11.4,2.5)
\psdots[dotsize=2pt 0,dotstyle=*](11.7,2.5)
\psdots[dotsize=2pt 0,dotstyle=*](11.8,2.5)

\psdots[dotsize=2pt 0,dotstyle=*](10.1,2.6)
\psdots[dotsize=2pt 0,dotstyle=*](10.2,2.6)
\psdots[dotsize=2pt 0,dotstyle=*](10.4,2.6)
\psdots[dotsize=2pt 0,dotstyle=*](10.5,2.6)
\psdots[dotsize=2pt 0,dotstyle=*](10.6,2.6)
\psdots[dotsize=2pt 0,dotstyle=*](10.7,2.6)
\psdots[dotsize=2pt 0,dotstyle=*](10.8,2.6)
\psdots[dotsize=2pt 0,dotstyle=*](10.9,2.6)
\psdots[dotsize=2pt 0,dotstyle=*](11,2.6)
\psdots[dotsize=2pt 0,dotstyle=*](11.1,2.6)
\psdots[dotsize=2pt 0,dotstyle=*](11.2,2.6)
\psdots[dotsize=2pt 0,dotstyle=*](11.3,2.6)
\psdots[dotsize=2pt 0,dotstyle=*](11.4,2.6)
\psdots[dotsize=2pt 0,dotstyle=*](11.5,2.6)
\psdots[dotsize=2pt 0,dotstyle=*](11.7,2.6)
\psdots[dotsize=2pt 0,dotstyle=*](11.8,2.6)

\psdots[dotsize=2pt 0,dotstyle=*](10.2,2.7)
\psdots[dotsize=2pt 0,dotstyle=*](10.3,2.7)
\psdots[dotsize=2pt 0,dotstyle=*](10.4,2.7)
\psdots[dotsize=2pt 0,dotstyle=*](10.5,2.7)
\psdots[dotsize=2pt 0,dotstyle=*](10.6,2.7)
\psdots[dotsize=2pt 0,dotstyle=*](10.7,2.7)
\psdots[dotsize=2pt 0,dotstyle=*](10.9,2.7)
\psdots[dotsize=2pt 0,dotstyle=*](11,2.7)
\psdots[dotsize=2pt 0,dotstyle=*](11.1,2.7)
\psdots[dotsize=2pt 0,dotstyle=*](11.2,2.7)
\psdots[dotsize=2pt 0,dotstyle=*](11.3,2.7)
\psdots[dotsize=2pt 0,dotstyle=*](11.4,2.7)
\psdots[dotsize=2pt 0,dotstyle=*](11.5,2.7)
\psdots[dotsize=2pt 0,dotstyle=*](11.6,2.7)
\psdots[dotsize=2pt 0,dotstyle=*](11.7,2.7)
\psdots[dotsize=2pt 0,dotstyle=*](11.8,2.7)

\psdots[dotsize=2pt 0,dotstyle=*](10.2,2.8)
\psdots[dotsize=2pt 0,dotstyle=*](10.3,2.8)
\psdots[dotsize=2pt 0,dotstyle=*](10.4,2.8)
\psdots[dotsize=2pt 0,dotstyle=*](10.5,2.8)
\psdots[dotsize=2pt 0,dotstyle=*](10.6,2.8)
\psdots[dotsize=2pt 0,dotstyle=*](10.7,2.8)
\psdots[dotsize=2pt 0,dotstyle=*](10.8,2.8)
\psdots[dotsize=2pt 0,dotstyle=*](10.9,2.8)
\psdots[dotsize=2pt 0,dotstyle=*](11,2.8)
\psdots[dotsize=2pt 0,dotstyle=*](11.1,2.8)
\psdots[dotsize=2pt 0,dotstyle=*](11.2,2.8)
\psdots[dotsize=2pt 0,dotstyle=*](11.3,2.8)
\psdots[dotsize=2pt 0,dotstyle=*](11.4,2.8)
\psdots[dotsize=2pt 0,dotstyle=*](11.5,2.8)
\psdots[dotsize=2pt 0,dotstyle=*](11.7,2.8)
\psdots[dotsize=2pt 0,dotstyle=*](11.8,2.8)

\psdots[dotsize=2pt 0,dotstyle=*](10.1,2.9)
\psdots[dotsize=2pt 0,dotstyle=*](10.7,2.9)
\psdots[dotsize=2pt 0,dotstyle=*](10.8,2.9)
\psdots[dotsize=2pt 0,dotstyle=*](11.8,2.9)
\end{scriptsize}
\end{pspicture}
\caption{The set $\mathcal{H}\cap\mathcal{R}_n$ is mapped under $f_n$ into $B(\mathcal{Q},\varepsilon)$ and $f_n^{-1}(\mathcal{Q}\cap (1/l_n)\mathbb{Z}^2)$ is contained in the $\varepsilon$-neighbourhood of $\mathcal{H}$.}
\end{figure}

\begin{lemma}\label{conv lemma 2}
Given $0<\varepsilon<\min\{d(\mathcal{H},\partial F^{-1}(W)); d(\mathcal{Q},\partial W); \displaystyle\min_{i<j} d(\mathcal{H}_i,\mathcal{H}_j)\}$, there is a positive integer $n_0=n_0(\varepsilon)$ such that for every $n\geq n_0$ the following holds:

\begin{itemize}
\item[i) ]$f_n(\mathcal{H}\cap\mathcal{R}_n)$ is contained in $B(\mathcal{Q},\varepsilon)$;

\item[ii) ]$f_n^{-1}\left(\mathcal{Q}\cap(1/l_n)\mathbb{Z}^2\right)\subset B(\mathcal{H},\varepsilon)$.
\end{itemize}
\end{lemma}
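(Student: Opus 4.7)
The plan is to deduce both inclusions from the uniform convergence $\widehat{f}_n \to F$ on $I^2$ (obtained via Arzel\'a–Ascoli from the equi-Lipschitz extensions $\widehat{f}_n$), combined with the bi-Lipschitz structure of $F$ on each $V_i$ provided by Proposition \ref{bilip decom 2}. The key auxiliary fact is that $\widehat{f}_n$ agrees with $f_n$ on $\mathcal{R}_n\subset I^2$, so for every $x\in\mathcal{R}_n$ one has $\|f_n(x)-F(x)\|<\eta$ whenever $n$ is sufficiently large, with $\eta>0$ arbitrarily small.

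For part $(i)$ the argument is essentially immediate: pick $n_0$ so that $\|\widehat{f}_n-F\|_\infty<\varepsilon$ for all $n\geq n_0$. If $x\in\mathcal{H}\cap\mathcal{R}_n$, then $F(x)\in\mathcal{Q}$ by definition of $\mathcal{H}$, and $f_n(x)=\widehat{f}_n(x)$ because $x\in\mathcal{R}_n$, so $f_n(x)\in B(F(x),\varepsilon)\subset B(\mathcal{Q},\varepsilon)$.

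For part $(ii)$ the lower bi-Lipschitz constant $b=\tfrac{1}{2\mathsf{Reg}(F)^2}$ of $F|_{V_i}$ enters crucially. Set $\delta:=b\varepsilon$ and choose $n_0$ so that $\|\widehat{f}_n-F\|_\infty<\delta$ for every $n\geq n_0$. Given $p\in\mathcal{R}_n$ with $f_n(p)\in\mathcal{Q}\cap(1/l_n)\mathbb{Z}^2$, uniform convergence forces $F(p)\in B(\mathcal{Q},\delta)\subset B(\mathcal{Q},\varepsilon)\subset W$, the last inclusion using the hypothesis $\varepsilon<d(\mathcal{Q},\partial W)$. Consequently $p\in F^{-1}(W)=\bigsqcup_{i=1}^N V_i$, so $p\in V_i$ for some (unique) $i$. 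Let $y^\star\in\mathcal{Q}$ be the nearest point of $\mathcal{Q}$ to $F(p)$, so $\|F(p)-y^\star\|\leq\delta$. Since $\mathcal{Q}\subset W=F(V_i)$ and $F|_{V_i}$ is a bi-Lipschitz homeomorphism onto $W$, there exists $p^\star\in V_i$ with $F(p^\star)=y^\star$; by definition, $p^\star\in\mathcal{H}_i\subset\mathcal{H}$. Applying the lower bi-Lipschitz inequality yields $\|p-p^\star\|\leq b^{-1}\|F(p)-y^\star\|\leq b^{-1}\delta=\varepsilon$, hence $p\in B(\mathcal{H},\varepsilon)$.

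I do not foresee real technical obstacles: both parts amount to a careful chase through the definitions once $\delta$ has been chosen small enough compared to the lower bi-Lipschitz constant, and the hypothesis $\varepsilon<d(\mathcal{Q},\partial W)$ has been invoked to keep $F(p)$ inside the region $W$ where the bi-Lipschitz decomposition lives. The remaining conditions on $\varepsilon$, namely $\varepsilon<d(\mathcal{H},\partial F^{-1}(W))$ and $\varepsilon<\min_{i<j}d(\mathcal{H}_i,\mathcal{H}_j)$, are not strictly needed for the statement, but they secure the geometric separation (the tube $B(\mathcal{H},\varepsilon)$ lies inside $\bigsqcup_i V_i$ and the pieces $B(\mathcal{H}_i,\varepsilon)$ stay essentially disjoint) that is exploited in the subsequent mass-loss argument, where the $\omega$-co-uniformity of $f$ with $\omega(r)=o(r^2)$ will finally come into play.
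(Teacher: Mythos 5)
Your proof is correct, and it takes a genuinely different route from the paper's. For both parts the paper argues by contradiction combined with compactness: if the inclusion failed along a subsequence $n_k$, one extracts a convergent subsequence of witnesses (using compactness of $\mathcal{H}$ for (i), and of $I^2\setminus B(\mathcal{H},\varepsilon)$ and $\mathcal{Q}$ for (ii)), passes to a limit point, and derives a contradiction with $\mathcal{H}\subset F^{-1}(B(\mathcal{Q},\varepsilon))$ in case (i), or with $\mathcal{H}=F^{-1}(\mathcal{Q})\cap F^{-1}(W)$ in case (ii). This only uses continuity of $F$ and uniform convergence $\widehat f_n\to F$; the bi-Lipschitz structure of $F|_{V_i}$ from Proposition \ref{bilip decom 2} plays no role in the lemma's proof. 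Your argument, by contrast, is direct and quantitative: you exploit the explicit lower bi-Lipschitz constant $b=\tfrac{1}{2\mathsf{Reg}(F)^2}$ to pick $\delta=b\varepsilon$ and derive $n_0(\varepsilon)$ from the single condition $\|\widehat f_n-F\|_\infty<\delta$, and you use surjectivity of $F|_{V_i}:V_i\to W$ to produce a nearby point of $\mathcal{H}_i$. What this buys you is an effective, constructive estimate on $n_0$ in terms of $\varepsilon$ and $\mathsf{Reg}(F)$, rather than a mere existence statement; what it costs is reliance on the bi-Lipschitz decomposition, which the paper manages to avoid. Your observation that only $\varepsilon<d(\mathcal{Q},\partial W)$ is actually needed for this lemma (the other two bounds on $\varepsilon$ being imported for later use) is also accurate and a useful clarification. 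One small simplification worth noting: rather than introducing the nearest point $y^\star$, you can take $y^\star=f_n(p)\in\mathcal{Q}$ directly, which makes the strictness $\|F(p)-y^\star\|<\delta$, and hence $\|p-p^\star\|<\varepsilon$, immediate.
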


\begin{proof}

Assume that i) does not hold. Then there must exist an increasing sequence of integers $k_n$ and a sequence of points $x_{n}\in\mathcal{H}\cap\mathcal{R}_{k_n}$ such that $f_{k_n}(x_{n})$ does not belong to $B(\mathcal{Q},\varepsilon)$. From the compactness of $\mathcal{H}$ and after passing to a subsequence, we may assume that $(x_{n})_{n\geq 1}$ converges to a point $x\in \mathcal{H}$. By the uniform convergence of $\widehat{f}_n$ to $F$ (recall that $\widehat{f}_n$ was defined in (\ref{extension})), we have that $f_{k_n}(x_{n})\longrightarrow F(x)$. Since $B(\mathcal{Q},\varepsilon)$ is an open set, $F(x)$ cannot belong to $B(\mathcal{Q},\varepsilon)$. However, this contradicts the fact that $\mathcal{H}\subset F^{-1} (B(\mathcal{Q},\varepsilon))$.\\

To prove ii) we proceed also by contradiction. Suppose that there exist an increasing sequence of positive integers $(i_n)_{n\in\mathbb{N}}$ and a sequence of points $(u_{i_n})_{n\in\mathbb{N}}$ such that for every $n\geq 1$ we have that $u_{i_n}$ belongs to $f_{i_n}^{-1}(\mathcal{Q}\cap (1/l_{i_n})\mathbb{Z}^2)\setminus B(\mathcal{H},\varepsilon)$. Observe that $(u_{i_n})_{n\in\mathbb{N}}$ converges, up to a subsequence, to an element $u\in I^2\setminus B(\mathcal{H},\varepsilon)$. On the other hand, by the compactness of $\mathcal{Q}$ and since $\widehat{f}_n$ converges uniformly to $F$, we have that $f_{i_n}(u_{i_n})$ converges to $F(u)\in \mathcal{Q}$. However this would imply that $u\in\mathcal{H}$, which is impossible since $u\in I^2\setminus B(\mathcal{H},\varepsilon)$.
\end{proof}

From now on, for each natural number $n$ we consider $f_n$ as a map from $\phi_n(\mathcal{D}_{\rho})$ to $(1/l_n)\mathbb{Z}^2$; then the extension of the restriction of $f_n$ over $\mathcal{R}_n$, namely $\widehat{f_n|_{\mathcal{R}_n}}$, converges uniformly to $F$. Consider $k\in\mathbb{N}$ satisfying that

\begin{equation}\label{k}
0<\frac{1}{k}<\min\left\{\frac{d(\mathcal{H},\partial F^{-1}(W))}{2\mathsf{Reg}(F)^2+1},\ \min_{i<j}d(\mathcal{H}_i,\mathcal{H}_j),\ d(\mathcal{Q},\partial W)\right\}.
\end{equation}

For every non-negative integer $j$, let $\mathcal{Q}_{k,j}$ be the set points $y\in B(\mathcal{Q},1/k)$ for which $d(y,\partial B(\mathcal{Q},1/k))>jL/l_n$; notice that $\mathcal{Q}_{k,j}$ can be empty for a sufficiently large positive integer $j$. The key result to prove Proposition \ref{local non realizable} is given by Lemma \ref{claim4} below, which claims that for sufficiently large positive integers $k,n$, there is an annulus $A_{k,n}$ centred at the origin and with external radius equal to the diameter of $f_n^{-1}(\mathcal{Q}_{k,0})$, such that $f_n^{-1}(\mathcal{Q}\cap(1/l_n)\mathbb{Z}^2)\cap A_{k,n}$ is mapped under $f_n$ into a neighbourhood of $\partial B(\mathcal{Q},1/k)$. Concretely, Lemma \ref{claim4} is a consequence of the fact that points in the pre-image under $f_n$ of a closed ball $B$ contained in $W$ which are distant from the origin, must be mapped by $f_n$ ``close" to the boundary of $B$; this last fact (Lemma \ref{claim1} below) was inspired by the proof of Lemma 6 in \cite{CN} and relies strongly on the geometry of $\mathcal{D}_{\rho}$, namely, on the $2\mathbb{Z}^2$-property. 

\medskip

For every $j\geq 0$ such that $\mathcal{Q}_{k,j}\neq\emptyset$, let $x_{k,j}^{(n)}$ be a point in $f_n^{-1}(\mathcal{Q}_{k,j})$ with the property that $||x_{k,j}^{(n)}||$ is maximal. 
Let $i(n):=\max\{i\in\mathbb{N}:\ \mathcal{Q}_{k,i}\neq\emptyset\}$, i.e, $i(n)$ satisfies that the side-length of $\mathcal{Q}_{k,i(n)}$ is less or equal than $2L/l_n$. We claim that the distance from $f(x_{k,j}^{(n)})$ to $\partial\mathcal{Q}_{k,j}$ cannot be larger than $L/l_n$.

\begin{lemma}\label{claim1}
For every $j<i(n)$ there holds $d(f_n(x_{k,j}^{(n)}), \partial\mathcal{Q}_{k,j})\leq\, L/l_n$.
\end{lemma}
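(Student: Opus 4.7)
The plan is to argue by contradiction. Write $x := x_{k,j}^{(n)}$ and $y := f_n(x)$, and suppose for contradiction that $d(y, \partial\mathcal{Q}_{k,j}) > L/l_n$. I will exhibit a point $x' \in \phi_n(\mathcal{D}_\rho) \cap f_n^{-1}(\mathcal{Q}_{k,j})$ with $||x'|| > ||x||$, contradicting the maximality condition defining $x_{k,j}^{(n)}$.

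First I would use the $2\mathbb{Z}^2$-property of $\mathcal{D}_\rho$ to produce an integer-lattice neighbor of $q := \phi_n^{-1}(x) \in \mathcal{D}_\rho$ whose image under $\phi_n$ is pushed strictly outward in sup-norm. Pick $i \in \{1,2\}$ with $|x_i| = ||x||$, set $\sigma := \mathrm{sign}(x_i) \in \{-1, +1\}$ (taking $\sigma := +1$ when $x_i = 0$), and let $i^c$ denote the other coordinate index. I seek $q' := q + \sigma e_i + v\, e_{i^c}$ with $v \in \{-1, 0, 1\}$: by the $2\mathbb{Z}^2$-property it suffices that $q'_i$ or $q'_{i^c}$ be even, which a short case split on the parities of $q_i$ and $q_{i^c}$ settles (take $v = 0$ if $q_i + \sigma$ is even; otherwise pick $v \in \{-1, 0, 1\}$ so that $q_{i^c} + v$ is even). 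In every case $q' \in \mathcal{D}_\rho$, the displacement $w := \sigma e_i + v\, e_{i^c}$ has $||w|| = 1$, and the image $\phi_n(q') = x + w/l_n$ has $i$-th coordinate of absolute value $|x_i| + 1/l_n = ||x|| + 1/l_n$ and $i^c$-th coordinate of absolute value at most $|x_{i^c}| + 1/l_n \leq ||x|| + 1/l_n$. Consequently
$$||\phi_n(q')|| = ||x|| + 1/l_n > ||x||, \qquad ||\phi_n(q') - x|| = 1/l_n.$$

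Next, the $L$-Lipschitz property of $f_n$ yields
$$||f_n(\phi_n(q')) - y|| \leq L\,||\phi_n(q') - x|| = L/l_n < d(y, \partial\mathcal{Q}_{k,j}),$$
so $f_n(\phi_n(q'))$ lies inside $\mathcal{Q}_{k,j}$, i.e.\ $\phi_n(q') \in f_n^{-1}(\mathcal{Q}_{k,j})$. Since $||\phi_n(q')|| > ||x||$, this contradicts the maximality of $||x_{k,j}^{(n)}||$ over $f_n^{-1}(\mathcal{Q}_{k,j})$, completing the proof.

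The main obstacle is the parity bookkeeping in the first step: the $2\mathbb{Z}^2$-property only forces lattice points with at least one even coordinate to belong to $\mathcal{D}_\rho$, so the naive unit step $\sigma e_i$ need not land in $\mathcal{D}_\rho$. The transverse shift $v\, e_{i^c}$ repairs this, and the key geometric observation is that such a transverse shift of at most $1/l_n$ cannot push $|x_{i^c} + v/l_n|$ beyond $||x|| + 1/l_n$, so the sup-norm gain from the step in direction $\sigma e_i$ is preserved. This discrete argument is in the spirit of Lemma~6 of \cite{CN}.
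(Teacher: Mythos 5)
Your proof is correct and follows essentially the same contradiction argument as the paper: under the assumption $d(f_n(x_{k,j}^{(n)}),\partial\mathcal{Q}_{k,j})>L/l_n$, the $2\mathbb{Z}^2$-property produces a lattice neighbor of $x_{k,j}^{(n)}$ in $\phi_n(\mathcal{D}_\rho)$ with strictly larger sup-norm, whose image under the $L$-Lipschitz map $f_n$ lands in $\mathcal{Q}_{k,j}$, contradicting the maximality of $||x_{k,j}^{(n)}||$. The only difference is presentational: the paper leaves the parity case analysis implicit (deferring to Figure 5), while you carry it out explicitly via the transverse shift $v\,e_{i^c}$.
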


\begin{proof}
Indeed, if $d(f_n(x_{k,j}^{(n)}),\partial\mathcal{Q}_{k,j})> L/l_n$, then $B(f_n(x_{k,j}^{(n)}),L/l_n)$ is strictly contained in $\mathcal{Q}_{k,j}$. On the other hand, the $2\mathbb{Z}^2$-property implies that at least one of the points $x_{k,j}^{(n)}\pm e_{1,n}, x_{k,j}^{(n)}\pm e_{2,n}, x_{k,j}^{(n)}\pm e_{1,n}\pm e_{2,n}$ (where $e_1=(1/l_n,0)$ and $e_2=(0,1/l_n)$) belongs to $\phi_n(\mathcal{D}_{\rho})$ and lies at distance from the origin larger than that $||x_{k,j}^{(n)}||$; see Figure 5 for a picture of this situation when $x_{k,j}^{(n)}$ lies to the left-hand side of the square $B(||x_{k,j}^{(n)}||)$. Then, from the $L$-Lipschitz condition, this point is mapped by $f_n$ into $B(f(x_{k,j}^{(n)}),L/l_n)\subset\mathcal{Q}_{k,j}$, contradicting the choice of $x_{k,j}^{(n)}$.\\

\begin{figure}[h!]
\psset{xunit=0.7cm,yunit=0.7cm,algebraic=true,dimen=middle,dotstyle=o,dotsize=3pt 0,linewidth=1pt,arrowsize=3pt 2,arrowinset=0.25}
\begin{pspicture}(-7,0)(10,8)
\pspolygon[linewidth=1pt](2,1)(8,1)(8,7)(2,7)
\begin{scriptsize}
\psdots[dotstyle=*](2,2)
\psline{->}(1.8,2)(0.8,2)
\rput[bl](0,1.8){$x_{k,j}^{(n)}$}
\psdots[dotstyle=*](1.6,2.4)
\psdots[dotstyle=*](1.6,1.6)
\psdots[dotstyle=*](2,1.6)
\psdots[dotstyle=*](2.4,1.6)
\psdots[dotstyle=*](2,2.4)
\psdots[dotstyle=*](2.4,2.4)
\psdots[dotstyle=*](2.4,2)
\rput[bl](4,7.5){$B(||x_{k,j}^{(n)}||)$}
\end{scriptsize}
\end{pspicture}
\caption{A possible configuration of $\phi_n(\mathcal{D}_{\rho})$ around $x_{k,j}^{(n)}$.}
\end{figure}
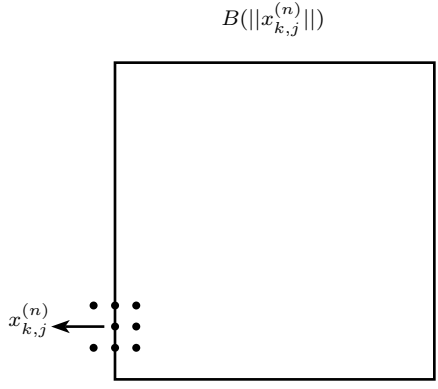

\end{proof}

\begin{lemma}\label{claim2}
For every positive integer $j<i(n)$ there holds $||x_{k,j}^{(n)}||<||x_{k,j-1}^{(n)}||$. In particular the set $\mathsf{Ann}((0,0),||x_{k,j}^{(n)}||,||x_{k,j-1}^{(n)}||)$ is non-empty. 
\end{lemma}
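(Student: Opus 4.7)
The non-strict inequality $\|x_{k,j}^{(n)}\|\leq\|x_{k,j-1}^{(n)}\|$ is immediate: the defining condition shows $\mathcal{Q}_{k,j}\subset\mathcal{Q}_{k,j-1}$, so $f_n^{-1}(\mathcal{Q}_{k,j})\subset f_n^{-1}(\mathcal{Q}_{k,j-1})$, and hence $x_{k,j}^{(n)}\in f_n^{-1}(\mathcal{Q}_{k,j-1})$; the maximality of $\|x_{k,j-1}^{(n)}\|$ over this latter set yields the bound. The entire content of the lemma is therefore to rule out equality.

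The plan is to mimic the neighbor-perturbation argument used in Lemma \ref{claim1}. Suppose, for contradiction, that $\|x_{k,j}^{(n)}\|=\|x_{k,j-1}^{(n)}\|$. Exactly as in the proof of Lemma \ref{claim1}, the $2\mathbb{Z}^2$-property guarantees that at least one of the eight ``neighbors'' $\tilde{x}:=x_{k,j}^{(n)}\pm e_{1,n},\,x_{k,j}^{(n)}\pm e_{2,n},\,x_{k,j}^{(n)}\pm e_{1,n}\pm e_{2,n}$ belongs to $\phi_n(\mathcal{D}_{\rho})$ and satisfies $\|\tilde{x}\|>\|x_{k,j}^{(n)}\|$. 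Since $\|\tilde{x}-x_{k,j}^{(n)}\|\leq 1/l_n$ in the sup-norm, the $L$-Lipschitz property of $f_n$ yields $\|f_n(\tilde{x})-f_n(x_{k,j}^{(n)})\|\leq L/l_n$.

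Because $f_n(x_{k,j}^{(n)})\in\mathcal{Q}_{k,j}$, by the definition of $\mathcal{Q}_{k,j}$ we have $d(f_n(x_{k,j}^{(n)}),\partial B(\mathcal{Q},1/k))>jL/l_n$. The triangle inequality then gives
\[
d(f_n(\tilde{x}),\partial B(\mathcal{Q},1/k))\,>\,jL/l_n-L/l_n\,=\,(j-1)L/l_n,
\]
so $f_n(\tilde{x})\in\mathcal{Q}_{k,j-1}$. Thus $\tilde{x}\in f_n^{-1}(\mathcal{Q}_{k,j-1})$, yet $\|\tilde{x}\|>\|x_{k,j}^{(n)}\|=\|x_{k,j-1}^{(n)}\|$, contradicting the maximality of $\|x_{k,j-1}^{(n)}\|$. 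Hence the strict inequality holds, and the non-emptiness of $\mathsf{Ann}((0,0),\|x_{k,j}^{(n)}\|,\|x_{k,j-1}^{(n)}\|)$ is then immediate from the definition of the annulus.

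There is essentially no obstacle beyond observing the right calibration: the width $L/l_n$ of a single ``shell'' $\mathcal{Q}_{k,j-1}\setminus\mathcal{Q}_{k,j}$ is precisely the worst-case Lipschitz displacement of $f_n$ over one $(1/l_n)$-step in $\phi_n(\mathcal{D}_{\rho})$. This matching is exactly what lets the neighbor $\tilde{x}$ of $x_{k,j}^{(n)}$ stay inside the neighboring shell $\mathcal{Q}_{k,j-1}$, which is why the construction was set up with this layer width in the first place.
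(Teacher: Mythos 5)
Your proof is correct, and it is a slight variation of the paper's. The paper invokes Lemma \ref{claim1} directly at both indices $j-1$ and $j$ (after first noting that under equality of norms one may take $x_{k,j-1}^{(n)}=x_{k,j}^{(n)}$), and derives a contradiction from the position of $f_n(x_{k,j-1}^{(n)})$ relative to $\partial\mathcal{Q}_{k,j-1}$. You instead unfold the proof of Lemma \ref{claim1}: you repeat the $2\mathbb{Z}^2$ neighbor-perturbation and Lipschitz estimate to manufacture a point $\tilde{x}\in f_n^{-1}(\mathcal{Q}_{k,j-1})$ of strictly larger norm, contradicting the maximality that defines $x_{k,j-1}^{(n)}$. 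Both routes rest on the same core phenomenon (the shell width $L/l_n$ matching the worst-case one-step displacement of $f_n$), but yours is more self-contained and arguably easier to parse, since the paper's closing sentence about $\partial\mathcal{Q}_{k,j-1}\cap\mathcal{Q}_{k,j-1}$ takes a moment to unravel, whereas your contradiction with maximality is immediate. One step you leave implicit: to conclude $f_n(\tilde{x})\in\mathcal{Q}_{k,j-1}$ from the distance estimate you also need $f_n(\tilde{x})\in B(\mathcal{Q},1/k)$; this does follow, since $j\geq 1$ forces $d(f_n(x_{k,j}^{(n)}),\partial B(\mathcal{Q},1/k))>L/l_n$, so a perturbation of size at most $L/l_n$ stays inside the ball, but it is worth saying.
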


\begin{proof}
We proceed by contradiction. By construction of $x_{k,j}^{(n)}$, it is sufficient to consider the case \begin{equation}\label{disteq}
||x_{k,j}^{(n)}||=||x_{k,j-1}^{(n)}||.
\end{equation}

In this case by Lemma \ref{claim1}, the preceding equality \eqref{disteq} and the definition of $x_{k,j}^{(n)}$, we get that 

\begin{equation*}
d(f_n(x_{k,j-1}^{(n)}),\partial\mathcal{Q}_{k,j-1})\leq L/l_n\qquad\text{and}\qquad d(f_n(x_{k,j-1}^{(n)}),\partial\mathcal{Q}_{k,j})\leq L/l_n
\end{equation*}

This implies that $f_n(x_{k,j-1}^{(n)})\in\partial\mathcal{Q}_{k,j-1}\cap\mathcal{Q}_{k,j-1}$, which is impossible since $\mathcal{Q}_{k,j-1}$ is an open set.
\end{proof}

Now we shall prove that $x_{k,i(n)}^{(n)}$ belongs to $I^2$ for a sufficiently large $n\in\mathbb{N}$.

\begin{lemma}\label{claim3}
Given $k$ as in \eqref{k}, there must exist a positive integer $n_k$ depending on $k$ such that for every $n\geq n_k$ the point $x_{k,i(n)}^{(n)}$ belongs to $\mathcal{H}$.
\end{lemma}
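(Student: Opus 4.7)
The plan is to argue by contradiction: suppose there is a subsequence $(n_j)$ with $x^{(n_j)}:=x_{k,i(n_j)}^{(n_j)}\notin\mathcal{H}$, and show that $x^{(n_j)}$ must eventually lie in some piece $\mathcal{H}_{i_0}\subset\mathcal{H}$, producing a contradiction.

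First I will use the maximality of $i(n)$ to locate $\mathcal{Q}_{k,i(n)}$. Since $\mathcal{Q}$ is a sup-norm ball of radius $r_\mathcal{Q}$ centred at $y_m$, the set $\mathcal{Q}_{k,j}$ is a sup-norm cube centred at $y_m$ of radius $r_{\mathcal{Q}}+1/k-jL/l_n$; by maximality of $i(n)$ this radius is at most $L/l_n$, hence $\mathcal{Q}_{k,i(n)}\subset\mathcal{Q}$ for $n$ large and $f_n(x^{(n)})\in\mathcal{Q}\cap(1/l_n)\mathbb{Z}^2$, which puts us in position to invoke Lemma~\ref{conv lemma 2}(ii). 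Since each $F|_{V_i}$ is $(b,L)$-bi-Lipschitz onto $W$ with $b=1/(2\mathsf{Reg}(F)^2)$ and $d(\mathcal{Q},\partial W)>0$, one has $d(\mathcal{H}_i,\partial V_i)\geq b\cdot d(\mathcal{Q},\partial W)>0$ for each $i$; and since any path from $\mathcal{H}_i\subset V_i\subset I^2$ to $\partial I^2$ must cross $\partial V_i$, one also has $d(\mathcal{H}_i,\partial I^2)>0$. I choose $\varepsilon>0$ small enough that the neighbourhoods $B(\mathcal{H}_i,\varepsilon)$ are pairwise disjoint and each $\overline{B(\mathcal{H}_i,\varepsilon)}\subset V_i\cap\mathrm{int}(I^2)$.

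By Lemma~\ref{conv lemma 2}(ii) applied with this $\varepsilon$, for $n_j$ large we have $x^{(n_j)}\in B(\mathcal{H},\varepsilon)=\bigsqcup_i B(\mathcal{H}_i,\varepsilon)$. The pigeonhole principle lets us pass to a subsequence where $x^{(n_j)}\in B(\mathcal{H}_{i_0},\varepsilon)$ for some fixed $i_0$, and compactness allows a further extraction so that $x^{(n_j)}\to x^*\in\overline{B(\mathcal{H}_{i_0},\varepsilon)}\subset V_{i_0}$. Because $x^{(n_j)}\in I^2\cap\phi_{n_j}(\mathcal{D}_\rho)=\mathcal{R}_{n_j}$, the uniform convergence $\widehat{f_{n_j}|_{\mathcal{R}_{n_j}}}\to F$ on $I^2$ combined with $f_{n_j}(x^{(n_j)})\in\mathcal{Q}_{k,i(n_j)}$ (which contains $y_m$ and has diameter at most $2L/l_{n_j}\to 0$) yields $F(x^*)=y_m$.

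Finally, since $y_m$ lies in the interior of $\mathcal{Q}$, continuity of $F$ on $I^2$ gives $F(x^{(n_j)})\in\mathcal{Q}$ for $n_j$ large, and openness of $V_{i_0}$ combined with $x^*\in V_{i_0}$ gives $x^{(n_j)}\in V_{i_0}$ for $n_j$ large. Therefore $x^{(n_j)}\in F^{-1}(\mathcal{Q})\cap V_{i_0}=\mathcal{H}_{i_0}\subset\mathcal{H}$, contradicting the choice of the subsequence. The main obstacle will be ensuring that $\varepsilon$ can be chosen to meet all the smallness requirements at once, which crucially uses the bi-Lipschitz estimate of Proposition~\ref{bilip decom 2} together with $d(\mathcal{Q},\partial W)>0$ to push $\mathcal{H}_i$ a positive distance away from $\partial V_i$ (and hence from $\partial I^2$).
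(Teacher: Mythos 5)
Your proposal is correct, but it follows a genuinely different route from the paper's proof.

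The paper argues directly: it observes that $F^{-1}(\mathcal{Q}_{k,i(n)})$ shrinks toward $F^{-1}(y_m)$, hence for $n$ large one has the quantitative gap $d(\partial F^{-1}(\mathcal{Q}_{k,i(n)}),\partial\mathcal{H})>1/k$; it then applies Lemma~\ref{conv lemma 2}(ii) \emph{to the shrunk ball} $\mathcal{Q}_{k,i(n_k)}$ with the fixed $\varepsilon=1/k$, and uses the nesting $\mathcal{Q}_{k,i(n)}\subset\mathcal{Q}_{k,i(n_k)}$ to conclude $x_{k,i(n)}^{(n)}\in B\bigl(F^{-1}(\mathcal{Q}_{k,i(n_k)}),1/k\bigr)\subset\mathcal{H}$. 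You instead apply Lemma~\ref{conv lemma 2}(ii) once, to $\mathcal{Q}$ itself and with a small auxiliary $\varepsilon$, and replace the quantitative distance estimate by a compactness argument: after passing to a subsequence where $x^{(n_j)}\to x^*$, you use that the innermost cube $\mathcal{Q}_{k,i(n_j)}$ has diameter $\leq 2L/l_{n_j}\to 0$ and contains $y_m$, combined with the uniform convergence $\widehat{f_{n_j}}\to F$, to deduce $F(x^*)=y_m\in\operatorname{int}\mathcal{Q}$; since $x^*\in V_{i_0}$ and $V_{i_0}$ is open, this pulls $x^{(n_j)}$ into $\mathcal{H}_{i_0}$ for large $j$, the contradiction. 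The paper's argument is shorter and in principle yields an explicit $n_k$; yours is softer, avoids the somewhat delicate step of invoking Lemma~\ref{conv lemma 2}(ii) for a ball other than the fixed $\mathcal{Q}$, and makes the role of the center $y_m$ (which the paper only exploits later, in Remark~\ref{remcentpoint}) completely transparent. One minor slip worth fixing: the lower bound $d(\mathcal{H}_i,\partial V_i)\geq b\cdot d(\mathcal{Q},\partial W)$ uses the bi-Lipschitz inequality in the wrong direction; the correct estimate from $(b,L)$-bi-Lipschitzness is $d(\mathcal{H}_i,\partial V_i)\geq d(\mathcal{Q},\partial W)/L$. Since all you need is strict positivity of this distance (to choose $\varepsilon$), the argument is unaffected.
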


\begin{proof}
By the bi-Lipschitz decomposition of $F$, there is a large-enough $n_k\in\mathbb{N}$ such that for every $n\geq n_k$ there holds

\begin{equation*}
d(\partial F^{-1}(\mathcal{Q}_{k,i(n)}), \partial\mathcal{H})>\frac{1}{k}.
\end{equation*}

Hence the claim follows by applying part ii) of Lemma \ref{conv lemma 2} to $\mathcal{Q}_{k,i(n_k)}$, $\varepsilon=1/k$ and by observing that $\mathcal{Q}_{k,i(n)}\subset\mathcal{Q}_{k,i(n_k)}$ for every $n\geq n_k$. 
\end{proof}

\begin{lemma}\label{claim4}
For every $0\leq j\leq i(n)$, the set $\mathsf{Ann}((0,0),||x_{k,j}^{(n)}||,||x_{k,j-1}^{(n)}||)\cap f_n^{-1}(\mathcal{Q}_{k,0})$ is mapped into $\mathcal{Q}_{k,j-1}\setminus\mathcal{Q}_{k,j}$ under $f_n$.
\end{lemma}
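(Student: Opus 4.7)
The plan is to prove the statement by induction on $j$, exploiting the bijectivity of $f_n:\phi_n(\mathcal{D}_{\rho})\to (1/l_n)\mathbb{Z}^2$ to upgrade the one-sided containments produced by the inductive hypothesis into a sharp cardinality identity. Writing $r_j:=||x_{k,j}^{(n)}||$, the base case $j=1$ is immediate: for $y$ in $\mathsf{Ann}((0,0),r_1,r_0)\cap f_n^{-1}(\mathcal{Q}_{k,0})$, one has $f_n(y)\in\mathcal{Q}_{k,0}$ by hypothesis while $||y||>r_1$ together with the maximality defining $r_1$ forces $f_n(y)\notin\mathcal{Q}_{k,1}$.

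For the inductive step I would assume the conclusion for all $1\leq j'<j$. Since $f_n^{-1}(\mathcal{Q}_{k,0})\subset\{y:||y||\leq r_0\}$ by maximality of $r_0$, I partition this preimage disjointly into $\{y:||y||\leq r_{j-1}\}\cap f_n^{-1}(\mathcal{Q}_{k,0})$ and $\mathsf{Ann}((0,0),r_{j-1},r_0)\cap f_n^{-1}(\mathcal{Q}_{k,0})$, where the latter is itself the disjoint union of the annuli indexed by $j'<j$. Summing the inductive hypothesis over these indices and using injectivity of $f_n$ gives
\begin{equation*}
\left|\mathsf{Ann}((0,0),r_{j-1},r_0)\cap f_n^{-1}(\mathcal{Q}_{k,0})\right|\leq \left|(\mathcal{Q}_{k,0}\setminus\mathcal{Q}_{k,j-1})\cap (1/l_n)\mathbb{Z}^2\right|.
\end{equation*}
Combining this with the bijective identity $|f_n^{-1}(\mathcal{Q}_{k,0})|=|\mathcal{Q}_{k,0}\cap (1/l_n)\mathbb{Z}^2|$ yields
\begin{equation*}
\left|\{y:||y||\leq r_{j-1}\}\cap f_n^{-1}(\mathcal{Q}_{k,0})\right|\geq |\mathcal{Q}_{k,j-1}\cap (1/l_n)\mathbb{Z}^2|=|f_n^{-1}(\mathcal{Q}_{k,j-1})|.
\end{equation*}
Because the reverse inequality is the maximality of $r_{j-1}$, the two finite sets must coincide, giving the key identity $\{y:||y||\leq r_{j-1}\}\cap f_n^{-1}(\mathcal{Q}_{k,0})=f_n^{-1}(\mathcal{Q}_{k,j-1})$.

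With this identity in hand, the inductive step concludes as in the base case: for $y$ in $\mathsf{Ann}((0,0),r_j,r_{j-1})\cap f_n^{-1}(\mathcal{Q}_{k,0})$, the bound $||y||\leq r_{j-1}$ places $y$ in the common set above, so $f_n(y)\in\mathcal{Q}_{k,j-1}$, while $||y||>r_j$ forces $f_n(y)\notin\mathcal{Q}_{k,j}$ by maximality of $r_j$. The conceptual obstacle, and the step that initially looks worrying, is that the inductive hypothesis alone gives only the inclusion into the outer shells and does not a priori preclude some $y$ with $||y||\leq r_{j-1}$ from mapping into $\mathcal{Q}_{k,0}\setminus\mathcal{Q}_{k,j-1}$; the bijectivity of $f_n$ is precisely what rules this out via the cardinality count above.
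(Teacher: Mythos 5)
Your proof is correct, and it is in fact more careful than the argument written in the paper. The paper establishes the base case $j=1$ exactly as you do (maximality of $||x_{k,1}^{(n)}||$ forces any point of the outer annulus in $f_n^{-1}(\mathcal{Q}_{k,0})$ to land outside $\mathcal{Q}_{k,1}$), but for general $j$ it then only asserts ``analogously'' that $\mathsf{Ann}\big((0,0),||x_{k,j}^{(n)}||,||x_{k,j-1}^{(n)}||\big)\cap f_n^{-1}(\mathcal{Q}_{k,j-1})$ is mapped into $\mathcal{Q}_{k,j-1}\setminus\mathcal{Q}_{k,j}$. That inclusion, with the smaller preimage set $f_n^{-1}(\mathcal{Q}_{k,j-1})$ in place of $f_n^{-1}(\mathcal{Q}_{k,0})$, does follow from the maximality of $||x_{k,j}^{(n)}||$ alone, but it is strictly weaker than the lemma as stated. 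Passing to the version with $f_n^{-1}(\mathcal{Q}_{k,0})$ requires ruling out points of the annulus that land in $\mathcal{Q}_{k,0}\setminus\mathcal{Q}_{k,j-1}$, and, as you correctly observe, maximality by itself does not do this. Your induction on $j$ together with the bijective cardinality count, yielding the identity $\{y:\ ||y||\leq ||x_{k,j-1}^{(n)}||\}\cap f_n^{-1}(\mathcal{Q}_{k,0})=f_n^{-1}(\mathcal{Q}_{k,j-1})$, is precisely the missing step. Thus the two arguments share the base case and the contrapositive use of maximality, but your counting argument makes explicit a step the paper compresses into ``analogously'' and that is needed to obtain the lemma in the generality in which it is later applied (for instance in the proof of Lemma \ref{H=Q}, where the relevant points are known only to lie in $f_{n_k}^{-1}(\mathcal{Q}_{k,0})$ via Lemma \ref{conv lemma 2}).
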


\begin{proof}
From the maximality of $||x_{k,1}^{(n)}||$, we get 

\begin{equation*}
\mathsf{Ann}((0,0),||x_{k,1}^{(n)}||,||x_{k,0}^{(n)}||)\cap f_n^{-1}(\mathcal{Q}_{k,0})\subset\mathcal{Q}_{k,0}\setminus\mathcal{Q}_{k,1}.
\end{equation*}

Otherwise, there must exists $u\in f_n^{-1}(\mathcal{Q}_{k,0})$ with $||u||>||x_{k,1}^{(n)}||$ such that $f_{n}(u)\in \mathcal{Q}_{k,1}$, contradicting the choice of $x_{k,1}^{(n)}\in f_n^{-1}(\mathcal{Q}_{k,1})$. Analogously, for every non-negative integer $j$ such that $\mathcal{Q}_{k,j}\neq\emptyset$ there holds:

\begin{equation*}
\mathsf{Ann}((0,0),||x_{k,j}^{(n)}||,||x_{k,j-1}^{(n)}||)\cap f_n^{-1}(\mathcal{Q}_{k,j-1})\subset\mathcal{Q}_{k,j-1}\setminus\mathcal{Q}_{k,j}.
\end{equation*}
\end{proof}

In particular, from Lemma \ref{claim4} we have that there are no points in $\mathsf{Ann}((0,0),||x_{k,j}^{(n)}||,||x_{k,j-1}^{(n)}||)\cap f_n^{-1}(\mathcal{Q}_{k,j-1})$ which are sent into $\mathcal{Q}_{k,0}\setminus\mathcal{Q}_{k,j-1}$.

\medskip

Henceforth, we consider the points $x_{k,j}:=x_{k,j}^{(n)}$ where the $x_{k,j}^{(n)}$'s are the points as in the previous lemmas. For every $n\in\mathbb{N}$ let $j(n)$ be the minimum positive integer with the property that $x_{k,j}\in I^2$, which in fact must exists as a consequence of Lemma \ref{claim3}.

\begin{remark}\label{remcentpoint}
If $\mathcal{Q}$ is centred at a point $y_m\in (1/l_m)\mathbb{Z}^2$, then from Lemma \ref{claim4} we conclude that for a sufficiently large positive integer $n$, the pre-image of the center of $\mathcal{Q}$ under $f_n$, namely $f_{n}^{-1}(y_m)$, belongs to $I^2$.
\end{remark}
\medskip

Notice that for every $k\in\mathbb{N}$ satisfying \eqref{k}, the point $x_{k, j(n_k)}$ belongs to $B(\mathcal{H}, (2\mathsf{Reg}(F)^2+1)/k)$ (where $n_k$ is the positive integer given in the proof of Lemma \ref{claim3}), and hence the sequence $(x_{k,j(n_k)})_{k\geq 1}$ converges (under a subsequence) to an element $x_{lim}\in\mathcal{H}$. In addition, the sequence of balls $(\mathcal{Q}_{k,j(n_k)})_{k\geq 0}$ converges in the Hausdorff distance (under a subsequence) to a closed ball $\mathcal{M}\subset\mathcal{Q}$. We use this fact to prove the next claim. 
\medskip

\begin{lemma}\label{H=Q}
The set $\mathcal{H}$ is contained in $\overline{B}(||x_{lim}||)$. In particular $x_{lim}$ belongs to $\partial\mathcal{H}$.
\end{lemma}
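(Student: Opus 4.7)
\textbf{Plan for Lemma \ref{H=Q}.} My approach is proof by contradiction. Suppose some $y \in \mathcal{H}$ satisfies $\|y\| > \|x_{lim}\|$, and set $\delta := \|y\| - \|x_{lim}\| > 0$. The goal is to reach a contradiction using the maximality built into the definition of $x_{k,j(n_k)}^{(n_k)}$ together with the annular confinement coming from Lemma \ref{claim4}.

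First I would discretize $y$: since $\mathcal{D}_{\rho}$ is a Delone set, the rescaled set $\phi_{n_k}(\mathcal{D}_{\rho})$ has covering radius of order $1/l_{n_k}$, so I can pick $y_{n_k} \in \phi_{n_k}(\mathcal{D}_{\rho})$ with $\|y_{n_k} - y\| \to 0$. From the uniform convergence $\widehat{f_{n_k}|_{\mathcal{R}_{n_k}}} \to F$ on $I^2$, together with the $L$-Lipschitz bound on $f_{n_k}$ (which extends the control to nearby lattice points outside $\mathcal{R}_{n_k}$), I conclude $f_{n_k}(y_{n_k}) \to F(y) \in \mathcal{Q}$. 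For $k$ large enough, two things hold simultaneously: (a) $\|y_{n_k}\| > \|x_{k,j(n_k)}^{(n_k)}\|$, using $x_{k,j(n_k)}^{(n_k)} \to x_{lim}$ and the gap $\delta$; and (b) $f_{n_k}(y_{n_k}) \in B(\mathcal{Q},1/k) = \mathcal{Q}_{k,0}$.

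Next I would invoke Lemma \ref{claim4}. By Lemma \ref{claim2} the sequence $(\|x_{k,j}^{(n_k)}\|)_{j}$ is strictly decreasing, so (a) locates $y_{n_k}$ in some annulus $\mathsf{Ann}\bigl(0,\|x_{k,j_0}^{(n_k)}\|,\|x_{k,j_0-1}^{(n_k)}\|\bigr)$ with $1 \leq j_0 \leq j(n_k)$. Combined with (b), Lemma \ref{claim4} forces $f_{n_k}(y_{n_k}) \in \mathcal{Q}_{k,j_0-1}\setminus\mathcal{Q}_{k,j_0}$, and in particular $f_{n_k}(y_{n_k}) \in \mathcal{Q}_{k,0}\setminus\mathcal{Q}_{k,j(n_k)}$. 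Taking the limit and using the Hausdorff convergences $\mathcal{Q}_{k,0} \to \mathcal{Q}$ and $\mathcal{Q}_{k,j(n_k)} \to \mathcal{M}$, the limit point $F(y)$ must lie in $\mathcal{Q}\setminus\mathrm{int}(\mathcal{M})$.

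The main obstacle is converting the conclusion $F(y) \in \mathcal{Q}\setminus\mathrm{int}(\mathcal{M})$ into an actual contradiction, since a priori $F(y) \in \mathcal{Q}$ is consistent with lying on $\partial\mathcal{M}$. To close this gap, I would first treat the case $F(y) \in \mathrm{int}(\mathcal{M})$: then $F(y)$ eventually belongs to $\mathcal{Q}_{k,j(n_k)}$, the annular argument above is inapplicable, and instead $y_{n_k} \in f_{n_k}^{-1}(\mathcal{Q}_{k,j(n_k)})$ gives $\|y_{n_k}\| \leq \|x_{k,j(n_k)}^{(n_k)}\|$ in the limit, contradicting $\|y\| > \|x_{lim}\|$. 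To extend to an arbitrary $y \in \mathcal{H}$, I would use the bi-Lipschitz decomposition: each $F|_{V_i}$ is open, so $F^{-1}(\mathrm{int}(\mathcal{M}))$ is open in $\mathcal{H}$, and given any $y \in \mathcal{H}$ with $\|y\| > \|x_{lim}\|$ one can perturb $y$ within its component $\mathcal{H}_i$ to a nearby $y'$ with $F(y') \in \mathrm{int}(\mathcal{M})$ and $\|y'\| > \|x_{lim}\|$ still holding; apply the case already handled to obtain the contradiction. Finally, the ``in particular'' clause follows immediately: since $x_{lim} \in \mathcal{H} \subset \overline{B}(\|x_{lim}\|)$ and any interior point of $\mathcal{H}$ admits an $\mathcal{H}$-neighborhood reaching norms strictly above $\|x_{lim}\|$, we must have $x_{lim} \in \partial\mathcal{H}$.
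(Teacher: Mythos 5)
Your proof follows the paper's overall plan --- pick $y \in \mathcal{H}$ outside $\overline{B}(\|x_{lim}\|)$, discretize, locate $y_{n_k}$ in one of the annuli from Lemma \ref{claim2}, and funnel through Lemma \ref{claim4} --- but it breaks down at the key step where a contradiction must be produced, and the fix you propose is not sound.

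First, you only record the weak conclusion $F(y) \in \mathcal{Q} \setminus \mathrm{int}(\mathcal{M})$. You are actually in a position to say more: since $y \in I^2$ we have $\|y\| \le 1/2 < \|x_{k,j(n_k)-1}\|$ (the point $x_{k,j(n_k)-1}$ lies outside $I^2$ by the minimality defining $j(n_k)$), so for $k$ large the index $j_0$ in your annulus must equal $j(n_k)$, and Lemma \ref{claim4} gives $f_{n_k}(y_{n_k}) \in \mathcal{Q}_{k,j(n_k)-1}\setminus\mathcal{Q}_{k,j(n_k)}$. This is a thin shell of width $L/l_{n_k}$ that collapses onto $\partial\mathcal{M}$, so in fact $F(y) \in \partial\mathcal{M}$. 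By not extracting this, you deprive yourself of the ingredient the paper actually uses.

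Second, the step you propose to close the gap does not work. Having shown that every $y \in \mathcal{H}$ with $\|y\| > \|x_{lim}\|$ satisfies $F(y) \in \mathcal{Q}\setminus\mathrm{int}(\mathcal{M})$, you then claim that by a small perturbation inside $\mathcal{H}_i$ one can find $y'$ with $\|y'\| > \|x_{lim}\|$ and $F(y') \in \mathrm{int}(\mathcal{M})$. This claim is (i) internally inconsistent, since it asserts the existence of exactly the kind of point you have just ruled out, and (ii) false on its own terms: $\mathcal{M}$ is a fixed (possibly small) closed ball inside $\mathcal{Q}$, so if $F(y)$ happens to lie far from $\mathcal{M}$ in $\mathcal{Q}$, no nearby perturbation of $y$ lands in $\mathcal{M}$. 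The pointwise approach therefore cannot finish.

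The paper's proof instead makes a measure-theoretic argument that you are missing. One takes $u \in \mathrm{int}(\mathcal{H}_i)\setminus\overline{B}(\|x_{lim}\|)$ and a small closed ball $\overline{B}(u,\alpha)$ contained in that set; the annular confinement (Lemma \ref{claim4} applied with $j = j(n_k)$) shows $f_{n_k}(\overline{B}(u,\alpha)\cap\mathcal{R}_{n_k}) \subset \mathcal{Q}_{k,j(n_k)-1}\setminus\mathcal{Q}_{k,j(n_k)}$, whence $F(\overline{B}(u,\alpha)) \subset \partial\mathcal{M}$. Because $F|_{\mathcal{H}_i}$ is bi-Lipschitz, it carries the positive-measure set $\overline{B}(u,\alpha)$ to a set of positive measure, contradicting $\lambda(\partial\mathcal{M}) = 0$. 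One then passes from $\mathrm{int}(\mathcal{H}) \subset \overline{B}(\|x_{lim}\|)$ to $\mathcal{H} \subset \overline{B}(\|x_{lim}\|)$ using that each component of $\mathcal{H}$ is the closure of its interior. Your argument is one natural step away from this (once you record $F(y) \in \partial\mathcal{M}$, apply it to an open ball rather than a single point and invoke bi-Lipschitzness), but as written it does not close.
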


\begin{proof}
Suppose there is a point $u\in int(\mathcal{H}_i)\setminus\overline{B}(||x_{lim}||)$ for some $1\leq i\leq N$. Thereupon there must exist a positive number $\alpha$ such that $\overline{B}(u,\alpha)$ is contained $int(\mathcal{H}_i)\setminus \overline{B}(||x_{lim}||)$. Thus, from Lemma \ref{claim4} and by the definition of $j(n_k)$, there is a sufficiently large $k_0\in\mathbb{N}$ such that $f_{n_k}(\overline{B}(u,\alpha)\cap\mathcal{R}_{n_k})$ is contained in $\mathcal{Q}_{k,j(n_k)-1}\setminus\mathcal{Q}_{k,j(n_k)}$ for every $k\geq k_0$. Hence, after passing to the limit, we obtain that $F(\overline{B}(u,\alpha))\subset\partial\mathcal{M}$, i.e, $F$ maps a set with positive Lebesgue measure into a set with zero-Lebesgue measure, which is impossible since $F|_{\mathcal{H}_i}$ is a bi-Lipschitz homeomorphism. Thus, $int(\mathcal{H})\subset \overline{B}(||x_{lim}||)$ and since each connected component of $\mathcal{H}$ is homeomorphic to a closed ball, we conclude that $\mathcal{H}\subset \overline{B}(||x_{lim}||)$. 
\end{proof}

\subsection{Proof of Proposition 3.2}\label{Proof3.2}

In order to prove Proposition \ref{local non realizable} we need to estimate the cardinality of the set of points in $\mathcal{Q}\cap (1/l_n)\mathbb{Z}^2$ which have no pre-image under $f_n:\mathcal{R}_n\to (1/l_n)\mathbb{Z}^2$. To treat these points, we require the following result which corresponds to a slightly different version of Lemma 3.1 in \cite{checos2} and whose proof is analogous.

\vspace{0.2cm}

\begin{lemma}\label{sym diff}
Let $U\subset\mathbb{R}^d$ be a closed set which is the image of $I^d$ under a bi-Lipschitz map and let $g:U\to\mathbb{R}^d$ be a homeomorphism and $h:U\to\mathbb{R}^d$ be continuous. Then $h(U)\Delta g(U)$ is contained in $\overline{B}(\partial g(U), ||g-h||_{\infty})$, where $\Delta$ denotes the symmetric difference.
\end{lemma}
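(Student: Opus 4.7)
The plan is to set $\delta := \|g-h\|_{\infty}$, fix an arbitrary $y \in h(U) \triangle g(U)$, and show $d(y,\partial g(U)) \leq \delta$ by splitting into the two natural cases $y \in h(U)\setminus g(U)$ and $y \in g(U)\setminus h(U)$. The first case is purely metric, while the second requires a topological (degree-theoretic) argument exploiting that $U$, and hence $g(U)$, is homeomorphic to a $d$-cell.

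For the first case, suppose $y = h(x)$ for some $x \in U$, and $y \notin g(U)$. Then $g(x) \in g(U)$ satisfies $\|g(x) - y\| \leq \delta$. If $g(x) \in \partial g(U)$ we are done immediately; otherwise $g(x)$ lies in the interior of $g(U)$ while $y$ lies outside $g(U)$, so the straight segment joining them meets $\partial g(U)$ in some point $p$, and $\|p - y\| \leq \|g(x) - y\| \leq \delta$.

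For the second case, I would pass to the continuous map $\tilde h := h \circ g^{-1} : g(U) \to \mathbb{R}^d$, which satisfies $\|\tilde h(z) - z\| \leq \delta$ for every $z \in g(U)$ (taking $z = g(x)$). Suppose for contradiction that $y \in g(U) \setminus h(U)$ with $d(y,\partial g(U)) > \delta$. Since $g(U)$ is homeomorphic to $I^d$ by hypothesis, $\mathrm{int}(g(U))$ is a bounded open set with boundary $\partial g(U)$, and $y$ lies in $\mathrm{int}(g(U))$. Consider the straight-line homotopy $H_t := (1-t)\,\tilde h + t\,\mathrm{id}$ on $g(U)$, which satisfies $\|H_t(z) - z\| \leq \delta$ for all $t\in[0,1]$ and $z\in g(U)$. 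In particular, for $z \in \partial g(U)$, we have $H_t(z) \in \overline{B}(\partial g(U),\delta)$, which does not contain $y$ by assumption. Hence $y \notin H_t(\partial g(U))$ for all $t\in[0,1]$, so the Brouwer degree $\deg(H_t, \mathrm{int}(g(U)), y)$ is well-defined and constant in $t$. At $t=1$, $H_1 = \mathrm{id}$ yields $\deg(\mathrm{id},\mathrm{int}(g(U)),y) = 1$, hence $\deg(\tilde h, \mathrm{int}(g(U)), y) = 1$, so $y \in \tilde h(\mathrm{int}(g(U))) \subset h(U)$, contradicting $y \notin h(U)$.

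The main obstacle is exactly the second case: one needs to invoke some genuinely topological input (the hypothesis that $U$ is homeomorphic to $I^d$ is crucial here, since otherwise the degree is not available in this clean form) to rule out the existence of ``holes'' in $h(U)$ strictly inside $g(U)$; the homotopy-invariance of degree is the natural tool, and replacing it by a purely metric argument seems unlikely to succeed. The first case, by contrast, is a one-line segment argument.
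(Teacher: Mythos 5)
Your proof is correct, and it is the standard argument. The first case is an elementary segment argument, and the second case (the one with genuine content) correctly invokes homotopy invariance of the Brouwer degree on $\mathrm{int}\,(g(U))$, using that $g(U)$ is a topological $d$-cell -- so that $\overline{\mathrm{int}\,(g(U))}=g(U)$, $\partial\bigl(\mathrm{int}\,(g(U))\bigr)=\partial g(U)$, and $\deg(\mathrm{id},\mathrm{int}\,(g(U)),y)=1$ -- and that the straight-line homotopy $H_t$ moves boundary points by at most $\delta$, so $y$ is never hit on $\partial g(U)$. The paper itself does not write out a proof of this lemma: it simply cites Lemma~3.1 of \cite{checos2} and asserts the proof is analogous; that referenced argument is exactly the degree-theoretic one you give (equivalently, the Burago--Kleiner observation that a continuous self-perturbation of a $d$-cell by at most $\varepsilon$ in sup-norm covers all points at distance $>\varepsilon$ from the boundary), so your route coincides with the paper's intended one.
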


\medskip

Before showing Proposition \ref{local non realizable}, we introduce some terminology. For every $n\in\mathbb{N}$ consider the normalized counting measures $\mu_n$ and $\nu_n$ defined by 

\begin{equation*}
\begin{split}
\mu_n(A):=\dfrac{|A\cap\mathcal{R}_n|}{l_n^2},\hspace{1cm}
\nu_n(C):=\dfrac{|C\cap\frac{1}{l_n}\mathbb{Z}^2|}{l_n^2}
\end{split}
\end{equation*}

Notice that $(\nu_n|_{F(I^2)})_{n\in\mathbb{N}}$ converges weakly to the Lebesgue measure in $F(I^2)$. Moreover, it can be shown that $\mu_n$ converges weakly to the measure $\rho\lambda$ in $I^2$; the proof of these facts follows the very same lines as Claims 5.3.1 and 5.3.2 in \cite{checos}.

\medskip

\begin{lemma}\label{conv measures}
The sequences of measures $(\mu_n)_{n\in\mathbb{N}}$ and $(\nu_n)_{n\in\mathbb{N}}$ converge weakly to $\rho\lambda$ and $\lambda$, respectively. In particular, $(\widehat{f}_n)_{\#}(\mu_n)$ converges weakly to $F_{\#}(\rho\lambda)$. 
\end{lemma}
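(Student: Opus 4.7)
The strategy is to verify weak convergence of $(\mu_n)$ and $(\nu_n)$ by testing against arbitrary bounded continuous functions, treating both sequences as discrete Riemann-sum approximations inherited from the construction of $\mathcal{D}_{\rho}$ in Section~\ref{ssection:anom_delone}. The statement for the push-forward measures then reduces to a standard argument combining the uniform convergence $\widehat{f}_n \to F$ on the compact set $I^2$ with the weak convergence $\mu_n \rightharpoonup \rho\lambda$.

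For $\nu_n \rightharpoonup \lambda$, one observes that the scaled lattice $(1/l_n)\mathbb{Z}^2$ has exactly $l_n^2$ points per unit area. Hence, for any continuous function $\psi : \mathbb{R}^2 \to \mathbb{R}$ with compact support,
\[
\int \psi \, d\nu_n = \frac{1}{l_n^2}\sum_{p \in (1/l_n)\mathbb{Z}^2} \psi(p)
\]
is precisely the Riemann sum for $\int \psi \, d\lambda$ on a cubic mesh of size $1/l_n$. Since $\psi$ is uniformly continuous and $1/l_n \to 0$, these sums converge to $\int \psi \, d\lambda$; this yields weak convergence on every compact subset of $\mathbb{R}^2$, in particular on any neighbourhood of $F(I^2)$.

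For $\mu_n \rightharpoonup \rho\lambda$, the key input is the counting statement from the construction. The change of variables $x = \phi_n^{-1}(y)$ (recall that $\phi_n$ contracts distances by the factor $1/l_n$) gives $\int_{T_{n,i}} \rho \circ \phi_n \, d\lambda = l_n^2 \int_{\phi_n(T_{n,i})} \rho \, d\lambda$. Thus each rescaled sub-square $\phi_n(T_{n,i}) \subset I^2$, of side length $1/m_n$, contains exactly $\bigl\lfloor l_n^2 \int_{\phi_n(T_{n,i})} \rho \, d\lambda \bigr\rfloor$ points of $\mathcal{R}_n$, up to boundary effects whose total contribution to $\mu_n(I^2)$ is $O(m_n/l_n) \to 0$. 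After division by $l_n^2$, this gives
\[
\mu_n\bigl(\phi_n(T_{n,i})\bigr) = (\rho\lambda)\bigl(\phi_n(T_{n,i})\bigr) + O(1/l_n^2),
\]
so that the total rounding error, summed over the $m_n^2$ sub-squares, is $O((m_n/l_n)^2) \to 0$. For a continuous test $\psi : I^2 \to \mathbb{R}$, uniform continuity of $\psi$ together with the vanishing diameter $1/m_n$ of the sub-squares allows one to replace $\psi$ inside each sub-square by its value at the centre, with an error that is uniformly $o(1)$. Summing these local contributions produces $\int \psi \, d\mu_n \to \int_{I^2} \psi \rho \, d\lambda$, as required.

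For the push-forward convergence, given a bounded continuous $\psi$ on $\mathbb{R}^2$, write
\[
\int \psi \, d(\widehat{f}_n)_{\#}\mu_n = \int_{I^2} \psi \circ \widehat{f}_n \, d\mu_n.
\]
Since $\widehat{f}_n \to F$ uniformly on $I^2$ and the images $\widehat{f}_n(I^2)$ lie in a fixed bounded set (thanks to the common Lipschitz bound $L$ and the base-point normalisation), $\psi$ is uniformly continuous on a compact set containing all these images, so that $\psi \circ \widehat{f}_n \to \psi \circ F$ uniformly on $I^2$. Together with the uniform bound $\mu_n(I^2) = O(1)$ and the weak convergence $\mu_n \rightharpoonup \rho\lambda$, a standard triangle-inequality estimate yields $\int \psi \circ \widehat{f}_n \, d\mu_n \to \int \psi \circ F \, d(\rho\lambda) = \int \psi \, dF_{\#}(\rho\lambda)$. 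The most delicate point is the $\mu_n$ argument, where one must simultaneously control the integer-part rounding at scale $1/l_n$, the boundary effects between adjacent sub-squares, and the Riemann-sum approximation of $\psi$ at scale $1/m_n$ --- all under the single hypothesis $l_n/m_n \to \infty$.
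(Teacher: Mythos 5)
Your proof is correct, and it follows essentially the same (Riemann-sum) approach that the paper adopts: the paper itself does not spell out the argument but refers to Claims 5.3.1 and 5.3.2 of Dymond, Kalu\v{z}a and Kopeck\'a \cite{checos}, and your proposal supplies precisely the details that are being outsourced. The two error sources you isolate --- the $O(m_n^2/l_n^2)$ total integer-part rounding and the $O(m_n/l_n)$ boundary double-counting on the grid lines subdividing $S_n$ --- are indeed the right ones, and the change of variables $\int_{T_{n,i}} \rho\circ\phi_n\,d\lambda = l_n^2\int_{\phi_n(T_{n,i})}\rho\,d\lambda$ correctly matches the construction of $\mathcal{D}_\rho$; the final push-forward step (uniform convergence of $\psi\circ\widehat{f}_n$ to $\psi\circ F$ combined with $\mu_n(I^2)=O(1)$ and $\mu_n\rightharpoonup\rho\lambda$) is also the standard and correct argument.
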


\medskip

Now we are in the position to prove Proposition \ref{local non realizable}. We prove this by showing that the loss of mass in $\mathcal{Q}$ only happens close to its boundary. Recall that $\mathcal{Q}$ is centred at a point $y=y_m\in (1/l_{m})\mathbb{Z}^2$ for some $m\in\mathbb{N}$; since $(l_n)_{n\geq 1}$ satisfy that $l_{n}$ divides to $l_{n+1}$, then $x\in(1/l_n)\mathbb{Z}^2$ for every $n\geq m$.

\medskip

\begin{proof}[Proof of Proposition \ref{local non realizable}]
To verify that $F_{\#}(\rho\lambda)|_{\mathcal{Q}}=\lambda|_{\mathcal{Q}}$, by Lemma \ref{conv measures} it is sufficient to show that the sequence $(\hat{f}_{n_k}|_{F^{-1}(\mathcal{Q})})_{\#}\mu_{n_k}$ converges weakly to $\lambda |_{\mathcal{Q}}$. We proceed in a similar way as in the proof of Lemma 3.4 in \cite{checos2}, but with some variations along our demonstration. By definition of weak convergence of measures it is sufficient to prove that for a given function $\varphi\in C_0(I^2,\mathbb{R})$, the expression

\begin{equation}\label{weak conv}
\left|\displaystyle\int_{\mathcal{Q}}\varphi d\nu_{n_k}-\displaystyle\int_{\widehat{f}_{n_k}(F^{-1}(\mathcal{Q}))}\varphi d(\widehat{f}_{n_k}|_{F^{-1}(\mathcal{Q})})_{\#}\mu_{n_k}\right|
\end{equation}

tends to $0$ when $k$ goes to $+\infty$. Observe that by the triangle inequality, the expression (\ref{weak conv}) can be bounded from above by

\begin{equation}\label{weak conv 2}
\underbrace{\left|\displaystyle\int_{\mathcal{Q}}\varphi d\nu_{n_k}-\displaystyle\int_{\widehat{f}_{n_k}(F^{-1}(\mathcal{Q}))}\varphi d\nu_{n_k}\right|}_{=T_{1,n_{k}}}+\underbrace{\left|\displaystyle\int_{\widehat{f}_{n_k}(F^{-1}(\mathcal{Q}))}\varphi d\nu_{n_k}-\displaystyle\int_{\widehat{f}_{n_k}(F^{-1}(\mathcal{Q}))}\varphi d(\widehat{f}_{n_k}|_{F^{-1}(\mathcal{Q})})_{\#}\mu_{n_k}\right|}_{=T_{2,n_k}};
\end{equation}

we denote by $T_{1,n_k}$ and $T_{2,n_k}$ the first and the second term in (\ref{weak conv 2}), respectively.\\

Notice that $T_{1,n_k}$ is at most $||\varphi||_{\infty}\nu_{n_k}(\mathcal{Q}\Delta\hat{f}_{n_k}(F^{-1}(\mathcal{Q})))$. To show $T_{1,n_k}\longrightarrow 0$, observe that

\begin{equation*}
\begin{split}
\mathcal{Q}\Delta\widehat{f}_{n_k}(F^{-1}(\mathcal{Q}))&\subset\bigcup_{i=1}^N  F(\mathcal{H}_i)\Delta\widehat{f}_{n_k}(\mathcal{H}_i)\\
       &\subset\bigcup_{i=1}^N\overline{B}(\partial F(\mathcal{H}_i),||F-\widehat{f}_{n_k}||_{\infty}),
\end{split}
\end{equation*}

where in the second step we use Lemma \ref{sym diff} for $U=\mathcal{H}_i$. Thus, by using the weak convergence of $\nu_{n_k}$ to $\lambda$, it follows that

\begin{equation*}
\nu_{n_k}(\mathcal{Q}\Delta\widehat{f}_{n_k}(F^{-1}(\mathcal{Q})))\leq\nu_{n_k}(\overline{B}(\partial\mathcal{Q},||F-\widehat{f}_{n_k}||_{\infty}))\longrightarrow 0. 
\end{equation*}

Therefore, the first term in (\ref{weak conv 2}) converges to 0.

\medskip

To prove that $T_{2,n_k}$ tends to 0 we firstly note that this expression can be bounded from above by

\begin{equation}\label{boundT2}
\dfrac{||\varphi||_{\infty}}{l_{n_k}^2} |A_{n_k}|,
\end{equation}

where $A_n:=\widehat{f}_n(\mathcal{H})\cap\frac{1}{l_n}\mathbb{Z}^2\setminus f_n(\mathcal{H})$. Observe that $\widehat{f}_{n_k}(\mathcal{H})$ is contained in $B(\mathcal{Q},||F-\widehat{f}_{n_k}||_{\infty})$ and thus 

\begin{equation}\label{loss of mass subset 1}
A_{n_k}\subset\left(B(\mathcal{Q},||F-\widehat{f}_{n_k}||_{\infty})\cap\frac{1}{l_{n_k}}\mathbb{Z}^2\right)\setminus f_{n_k}(\mathcal{H}\cap B(||x_{k,j(n_k)}||)),
\end{equation}

where the set $\mathcal{H}\cap\mathcal{R}_{n_k}\cap B(||x_{k,j(n_k)}||)$ is non-empty for a large-enough positive integer $k$, as a consequence of Lemmas \ref{claim4} and \ref{H=Q}. Thus by the injectivity of $f_{n_k}$ and by the part i) of Lemma \ref{conv lemma 2} which says that $\mathcal{H}\cap\mathcal{R}_{n_k}$ is contained in $f_{n_k}^{-1}(\mathcal{Q}_{k,0})\cap I^2$ for a sufficiently large $k$, we get

\begin{equation}\label{loss of mass subset 2}
\begin{split}
f_{n_k}(\mathcal{H}\cap B(||x_{k,j(n_k)}||))=f_{n_k}(f_{n_k}^{-1}(\mathcal{Q}_{k,0})\cap B(||x_{k,j(n_k)}||))\setminus f_{n_k}(f_{n_k}^{-1}(\mathcal{Q}_{k,0})\cap B(||x_{k,j(n_k)}||)\setminus\mathcal{H})
\end{split}
\end{equation}

(recall that by the choice of $k$ in \eqref{k},  the set $f_{n_k}^{-1}(\mathcal{Q}_{k,0})\cap\mathcal{R}_{n_k}$ is contained $F^{-1}(W)$). Thus, from \eqref{loss of mass subset 1}, \eqref{loss of mass subset 2} and Lemma \ref{claim4}, after taking cardinality and dividing by $l_{n_k}^2$ we get:

\begin{equation}\label{weak con 3}
\begin{split}
\frac{|A_{n_k}|}{l_{n_k}^2}\leq &\frac{1}{l_{n_k}^2}|f_{n_k}(f_{n_k}^{-1}(\mathcal{Q}_{k,0})\cap B(||x_{k,j(n_k)}||)\setminus\mathcal{H})|+\nu_{n_k}(B(\mathcal{Q},||F-\widehat{f}_{n_k}||_{\infty})\setminus\mathcal{Q}_{k,0})\\
&+\nu_{n_k}(\mathcal{Q}_{k,0}\setminus\mathcal{Q}_{k,j(n_k)}),
\end{split}
\end{equation}

where $B(\mathcal{Q},||F-\widehat{f}_{n_k}||_{\infty})\setminus\mathcal{Q}_{k,0}$ can be empty or not. By Lemma \ref{conv lemma 2} and from the choice of $k$ in \eqref{k}, the first term in the right-hand side of \eqref{weak con 3} can be estimated as below:

\begin{equation*}
\begin{split}
\frac{1}{l_{n_k}^2}|f_{n_k}(f_{n_k}^{-1}(\mathcal{Q}_{k,0})\cap B(||x_{k,j(n_k)}||)\setminus\mathcal{H})|&=\frac{1}{l_{n_k}^2}|f_{n_k}^{-1}(\mathcal{Q}_{k,0})\cap B(||x_{k,j(n_k)}||)\cap\mathcal{R}_{n_k}\setminus\mathcal{H}|\\
&\leq \mu_{n_k}\left(B\left(\partial\mathcal{H},\frac{1}{k}\right)\right)\\
&\longrightarrow\rho\lambda (\partial\mathcal{H})=0
\end{split}
\end{equation*}

If $B(\mathcal{Q},||F-\widehat{f}_{n_k}||_{\infty})\subset\mathcal{Q}_{k,0}$, then the second term in \eqref{weak con 3} is zero. In the other possible case, i.e if $\mathcal{Q}_{k,0}\subset B(\mathcal{Q},||F-\widehat{f}_{n_k}||_{\infty})$, we get

\[
\nu_{n_k}(B(\mathcal{Q},||F-\widehat{f}_{n_k}||_{\infty})\setminus\mathcal{Q}_{k,0})\leq\nu_{n_k}\left(B\left(\mathcal{Q},||F-\widehat{f}_{n_k}||_{\infty}+\frac{1}{k}\right)\right)\longrightarrow 0
\]

Finally, to estimate the most delicate term in \eqref{weak con 3}, we observe that:

\begin{equation*}
\begin{split}
\nu_{n_k}(\mathcal{Q}_{k,0}\setminus\mathcal{Q}_{k,j(n_k)})&\leq \lambda\left(B\left(\mathcal{Q}_{k,0}\setminus\mathcal{Q}_{k,j(n_k)},\frac{1}{l_{n_k}}\right)\right)\\
&\leq\sum_{j=1}^{j(n_k)}\lambda(\mathcal{Q}_{k,j-1}\setminus\mathcal{Q}_{k,j})+\lambda\left(B\left(\partial\mathcal{Q}_{k,0},\frac{1}{l_{n_k}}\right)\right)+\lambda\left(B\left(\partial\mathcal{Q}_{k,j(n_k)},\frac{1}{l_{n_k}}\right)\right)\\
&\leq \frac{L}{l_{n_k}}\left(2l_{\mathcal{Q}}+\frac{2}{k}-\frac{L}{l_{n_k}}\right)j(n_k)+\lambda\left(B\left(\partial\mathcal{Q}_{k,0},\frac{1}{l_{n_k}}\right)\right)\\
&\ +\lambda\left(B\left(\partial\mathcal{Q}_{k,j(n_k)},\frac{1}{l_{n_k}}\right)\right),
\end{split}
\end{equation*}

where the bound $l_{\mathcal{Q}}$ denotes the side-length of $\mathcal{Q}$. On one hand, it is direct that

\begin{equation*}
\lim_{k\to\infty}\lambda\left(B\left(\partial\mathcal{Q}_{k,0},\frac{1}{l_{n_k}}\right)\right)=\lim_{k\to\infty}\lambda\left(B\left(\partial\mathcal{Q}_{k,j(n_k)},\frac{1}{l_{n_k}}\right)\right)=0.
\end{equation*}

On the other hand, by the construction of $x_{k,j(n_k)}$ and by Lemma \ref{claim3} we have that $j(n_k)\leq ||x_{k,0}||$. Moreover, by Remark \ref{remcentpoint} and since $f$ is $\omega$-co-uniformly continuous, there holds that

\begin{equation*}
||x_{k,0}||\leq\frac{\omega\left(\left(\frac{l_{\mathcal{Q}}}{2}+\frac{1}{k}\right)l_{n_k}\right)}{l_{n_k}}+1.
\end{equation*}

Hence, since the co-uniformity is of order $o(r^2)$, we have that 

\begin{equation*}
\frac{L}{l_{n_k}}\left(2l_{\mathcal{Q}}+\frac{2}{k}-\frac{L}{l_{n_k}}\right)j(n_k)\leq L\left(2l_{\mathcal{Q}}+\frac{2}{k
}-\frac{L}{l_{n_k}}\right)\left(\frac{\omega\left(\left(\frac{l_{\mathcal{Q}}}{2}+\frac{1}{k}\right)l_{n_k}\right)}{l_{n_k}^2}+\frac{1}{l_{n_k}}\right)\longrightarrow 0.
\end{equation*}

Thus \eqref{boundT2} converges to 0 and therefore $T_{2,n_k}\longrightarrow 0$ when $k\to\infty$, as desired.
\end{proof}

\vspace{0.2cm}

\noindent{\bf Acknowledgments}. This work was funded by the ANID/FONDECYT Postdoctoral Grant 3210109. I would like to thank Andr\'es Navas for the enlightening discussions and Mircea Petrache for his comments. I also thank the anonymous referee for carefully reading this paper and helpful comments.


\begin{small}

\vspace{0.15cm}

\noindent Rodolfo Viera

\noindent Facultad de Matem\'aticas

\noindent Pontificia Universidad Cat\'olica de Chile

\noindent Av. Vicuña Mackenna 4860, Macul, Santiago, Chile

\noindent Email: rodolfo.viera@mat.uc.cl

\vspace{0.2cm}

\end{small}

\end{document}